\newlength{\abovebis} 
\newlength{\belowbis} 
\newlength{\aboveshortbis} 
\newlength{\belowshortbis} 
\everydisplay\expandafter{%
  \the\everydisplay 
  \advance\abovedisplayskip\abovebis 
  \advance\belowdisplayskip\belowbis 
  \advance\abovedisplayshortskip\aboveshortbis 
  \advance\belowdisplayshortskip\belowshortbis 
} 
\def\R{\mathbb{R}}
\def\N{\mathbb{N}}
\def\ND{\mathcal{N}}
\def\A{\mathcal{A}}
\def\L{\mathcal{L}}
\def\Div{\textrm{div}}
\def\P{\mathcal{P}}
\theoremstyle{plain}
\newtheorem{lem}{Lemma}[section]
\newtheorem{prop}[lem]{Proposition}
\theoremstyle{definition}
\newtheorem{defn}{Definition}[section]
\newtheorem{rem}{Remark}
\numberwithin{equation}{section}
\author[E. Beretta]{Elena Beretta}
\address[E. Beretta]{Dipartimento di Matematica, Politecnico di Milano, P.zza Leonardo da Vinci, 32, 20133 Milano, Italy}
\address[E. Beretta]{New York University Abu Dhabi, PO Box 129188, Saadiyat Island, Abu Dhabi, United Arab Emirates}
\email[E. Beretta]{elena.beretta@polimi.it}
\author[S. Micheletti]{Stefano Micheletti}
\address[S. Micheletti, S. Perotto, M. Santacesaria]{MOX, Dipartimento di Matematica, Politecnico di Milano, P.zza Leonardo da Vinci, 32, 20133 Milano, Italy}
\email[S. Micheletti]{stefano.micheletti@polimi.it}
\author[S. Perotto]{Simona Perotto}
\email[S. Perotto]{simona.perotto@polimi.it}
\author[M. Santacesaria]{Matteo Santacesaria}
\email[M. Santacesaria]{matteo.santacesaria@polimi.it}
\begin{document}

\title[Reconstruction via shape optimization in EIT]{Reconstruction of a piecewise constant conductivity on a polygonal partition via shape optimization in EIT}

\begin{abstract}
In this paper, we develop a shape optimization-based algorithm for the electrical impedance tomography (EIT) problem of determining a piecewise constant conductivity on a polygonal partition from boundary measurements. The key tool is to use a distributed shape derivative of a suitable cost functional with respect to movements of the partition. Numerical simulations showing the robustness and accuracy of the method are presented for simulated test cases in two dimensions. 
\end{abstract}

\keywords{Electrical impedance tomography, shape optimization , Neumann-to-Dirichlet map , regularization , reconstruction algorithm}

\subjclass{Primary 35R30, 65N21; Secondary 49Q10}
\maketitle

\section{Introduction}

Electrical Impedance Tomography (EIT) is a noninvasive technique, which aims to detect the conductivity inside a body from voltage and current boundary measurements.
The mathematical problem arising from EIT, known as the inverse conductivity problem, was introduced for the first time by A.-P.~Calder\'on in the early 80's \cite{Calder'on1980}. Even though it was first motivated by an application in geophysical prospecting \cite{Chambers2006}, EIT has been having big impact also in medical imaging and nondestructive testing of materials \cite{Karhunen2010}.

The conductivity problem can be stated mathematically as follows.
Consider a bounded domain $\Omega \subset \R^d$, with $d=2,3$, equipped with an electrical conductivity $\sigma \in L^{\infty}(\Omega)$ such that $\sigma(x) \geq \lambda >0$.  The corresponding Neumann-to-Dirichlet (ND) or current-to-voltage map is the operator $\ND_{\sigma}: H_0^{-1/2}(\partial \Omega) \to H_0^{1/2}(\partial \Omega)$, defined by
\begin{equation}
\ND_{\sigma}(g) = u|_{\partial \Omega},
\end{equation}
where $H^{s}_0(\partial \Omega) = \{ f \in H^s(\partial \Omega) : \int_{\partial \Omega} f\, ds = 0\}$, $g \in H_0^{-1/2}(\partial \Omega)$ and $u$ is the unique $H^1(\Omega)$-weak solution of the Neumann problem for the conductivity equation
\begin{equation}\label{schr}
\left\{
\begin{array}{ll}
-\nabla \cdot (\sigma \nabla u) &= 0, \qquad \text{in } \Omega,\\[2mm]
\sigma \displaystyle \frac{\partial u}{\partial \nu} &=g, \qquad \text{on } \partial \Omega,
\end{array}
\right.
\end{equation}
where $\nu$ is the unit outward normal to $\partial \Omega$,
satisfying the normalization condition
$$\int_{\partial \Omega} u\, ds = 0.
$$ 
The following inverse boundary value problem arises from this framework.

\vspace*{.2cm}

{\bf Inverse conductivity problem.} Given $\ND_{\sigma}$, find $\sigma$ in $\Omega$.

\vspace*{.2cm}

Since the seminal paper by A.-P.~Calder\'on, much interesting mathematics has been developed in order to address the issues of uniqueness, stability and reconstruction for this problem. Concerning uniqueness and reconstruction, we mention the breakthrough results in \cite{Sylvester1987,Novikov1988,Nachman1996}.

The conductivity problem is severely ill-posed as was noted by G.~Alessandrini in \cite{Alessandrini1988}. Despite a-priori smoothness assumptions on the unknown conductivity, a logarithmic-type continuous dependence of the conductivity on the data is the best possible one \cite{Alessandrini1988,mandache2001}. This fact makes crucial the analysis of the instability and of suitable regularization strategies in order to 
obtain successful computational reconstructions.  
Several recovery methods and procedures have been developed in the last decades.
Without being exhaustive, the possible approaches to reconstruction can be divided into two main streams:
\begin{enumerate}
\item iterative methods, based on ad-hoc regularization strategies; 
\item direct methods, where an explicit reconstruction formula of the solution is used.
\end{enumerate}
The first group includes variational-type methods, which reduce the inverse problem to a minimization problem for a least-squares constrained type functional with a suitable regularization. A pioneering paper in this direction is represented by \cite{Cheney1990a}, which applies one step of a Newton method with a constant conductivity as an initial guess. In \cite{Rondi2001} the authors introduce a Mumford-Shah type functional, in \cite{Chung2005electrical,Chan2004} a  level set representation and a total variation regularization is introduced, while in \cite{Chen1999} an augmented Lagrangian method is proposed. All these methods  are particularly suited to recover piecewise constant conductivities.

Concerning direct methods, we would like to mention the factorization method \cite{Bruhl2000, Kirsch2008}, the D-bar method \cite{Siltanen2000}, the enclosure method \cite{Ikehata2000a}, and the monotonicity method \cite{Tamburrino2002}.

Finally, statistical inversion has shed interesting insights into EIT reconstruction as well \cite{Kaipio2004a}.
 
Despite the impressive progress, there remains a big interest in developing new algorithms that take advantage of a-priori information arising from applications. 
An a-priori assumption physically relevant in many applications is to assume the conductivity to be of the form 
\begin{equation}\label{apriori}
\sigma=\sum_{j=1}^N\sigma_j\chi_{P_j},
\end{equation}
where $\mathcal{P}=\{P_j\}_{j=1}^N$ is a polygonal (polyhedral) partition of the background body $\Omega$, 
$\chi_P$ denoting the characteristic function associated with the generic region $P\subset \Omega$.
Such an assumption arises, for example, in geophysics, medical imaging, and nondestructive testing of materials, where the body under investigation contains regions, represented by the subdomains $\{P_j\}_{j=1}^N$, with different electrical properties.   

Additionally, this kind of a-priori information restores the well-posedness of the inverse problem; in particular, in the case of a given known partition, Lipschitz dependence estimates of the coefficients from the data can be shown \cite{Alessandrini2005a}.
From the results obtained for the Helmholtz equation in \cite{beretta2015stable} and \cite{beretta2016stable}, where Lipschitz stability holds, we expect that a similar result should be true also when the partition is unknown.
As shown in \cite{beretta2015stable}, a crucial role to prove Lipschitz stability  is played by the differentiability properties of the Neumann-to-Dirichlet map with respect to motions of the partition and by the derivation of an explicit formula for the derivative. This is also a crucial step towards reconstruction if we use an optimization approach to solve the inverse conductivity problem.

In this paper, we consider a cost functional, $J(\sigma)$, representing the $L^2$-norm of the difference between the potential due to the applied current and the measured potential on the boundary. This functional will be minimized in the class of conductivities $\sigma$ of the form \eqref{apriori}. 
To solve this minimization problem, we introduce an iterative gradient type method which requires the computation of the shape derivative of the functional $J(\sigma)$. This derivative has been obtained rigorously by several authors in the case of a single sufficiently smooth inclusion $\omega \subset \Omega$, i.e. for $\sigma = \sigma_1 \chi_{\omega}+\sigma_2\chi_{\Omega\backslash\omega}$ (see, for example,  \cite{hettlich1998determination,afraites2007shape,pantz2005sensibilite}).

In our case, to implement the optimization procedure, we need to differentiate $J(\sigma)$ with respect to variations of a partition. 
In \cite{beretta2017differentiability}, the authors derived, for the first time, a rigorous formula for the shape derivative of the functional $J(\sigma)$ in the planar case and for conductivities of the form
$$
\sigma=\sigma_1\chi_P+\sigma_2\chi_{\Omega\backslash P}
$$
where $P$ is a polygon strictly contained in $\Omega$. The shape derivative is expressed in terms of an integral over the boundary of $P$ and has, surprisingly, exactly the same form as the one derived in \cite{hettlich1998determination} in the case of a smooth interface, despite the presence of singularities of the gradient of the solutions to the conductivity equation at the vertices of $P$. The extension of this boundary formula seems not to be possible in the case of an arbitrary partition since the singularity of the gradient might become too strong at the vertices \cite{piccinini1972,bruce1974}.

In this paper, we follow the idea suggested in \cite{Laurain2016} and in \cite{Giacomini2017} of using a more general \textit{distributed} shape derivative of the functional expressed in terms of  an integral  over $\Omega$. The advantage of this formula is twofold: on the one hand, it allows to consider piecewise constant conductivities on very general partitions; on the other hand, it is numerically more accurate (we refer to \cite{Hiptmair2015} for a thorough comparison of the two formulas from a numerical point of view). In \cite{Laurain2016}, the authors establish the distributed shape derivative in the case of a single measurable conductivity inclusion strictly contained in $\Omega$ by using a Lagrangian approach. Here, we establish the formula computing directly and rigorously the derivative of $J$ in terms of the material derivative of the solution to a certain boundary value problem (see Lemma 2.1). The formula is valid for any partition $\mathcal{P}$ in dimension $d = 2,3$. Successively, we take advantage from this result to implement our reconstruction procedure in the two dimensional case, based on a gradient type method.

The reconstruction algorithm we present is very similar to the one introduced in \cite{Giacomini2015,Giacomini2017}. Nevertheless, there are two major differences in the implementation, which greatly affect the numerical results. The first one is a regularization step, applied at each iteration, where the number of the sides of each polygon in the partition $\mathcal{P}$ changes in order to preserve a uniform length. This considerably reduces the artifacts that typically appear in EIT reconstructions. The second major difference lies in the choice of the descent direction for the shape of the partition. While this was done by solving an additional variational problem, we propose a more direct computation which exploits the assumptions made on the conductivity.

The plan of the paper is the following one. In Section 2, we state the problem with the main assumptions, derive the shape derivative of the cost functional and show the equivalence with the boundary shape derivative established in \cite{beretta2017differentiability} for suitable two-dimensional partitions. In Section 3, we describe the reconstruction algorithm. In Section 4 we present some numerical examples which corroborate the reliability and the accuracy of the proposed approach.

\section{Mathematical framework}
\subsection{Main Assumptions} Let $\Omega \subset \R^d$, $d = 2,3$, be a bounded domain with Lipschitz boundary. Let $\P = \{P_j\}_{j=1}^{N}$ be a polytopal partition of $\Omega$, i.e., $P_j$ are open bounded polytopes (i.e., polygons in 2D, polyhedra in 3D) such that:
\begin{equation}
\displaystyle \bigcup_{j = 1}^{ N}  \overline{P_j} = \overline{\Omega}, \qquad P_j \cap P_k = \emptyset \quad \text{ for } j \neq k.
\end{equation}
\begin{defn}
Let $N \in \N$ be an integer with $N>1$, and $\lambda >0$ be a positive real number. We define the space $L^{\infty}(\Omega,N,\lambda)$ as the collection of conductivities $\sigma \in L^{\infty}(\Omega)$ such that $\sigma(x) \geq \lambda > 0$ for all $x \in \Omega$, and such that there exists a polytopal partition $\P = \{P_j\}_{j=1}^{N'}$ with $N' \leq N$, such that $\sigma$ can be written as
\begin{equation}
\sigma = \sum_{j=1}^{N'} \sigma_j \chi_{P_j}, \; \text{ with } \sigma_l \neq \sigma_m \text{ if } P_l \text{ is  adjacent to } P_m.
\end{equation}
\end{defn}
Now let $\hat \sigma, \sigma \in L^{\infty}(\Omega,N,\lambda)$. We denote by $\hat \sigma$ the unknown conductivity and by $\sigma$ a (generally) different one, which will be used in the reconstruction scheme.

Let $M$ be the number of measurements. For $1 \leq j \leq M$, let $g_j \in H_0^{-1/2}(\partial \Omega)$ be a given function representing the applied current density on $\partial \Omega$, and $f_j \in H^{1/2}(\partial \Omega)$ the corresponding measurement of the voltage on $\partial \Omega$. 
More precisely, $f_j =  \hat u_j|_{\partial \Omega}$, where $\hat u_j$ is a solution of
$$
\left\{
\begin{array}{ll}
-\nabla \cdot (\hat \sigma \nabla \hat u_j) &= 0, \qquad \text{in } \Omega,\\[2mm]
\hat \sigma \displaystyle \frac{\partial \hat u_j}{\partial \nu} &=g_j, \qquad \text{on } \partial \Omega.
\end{array}
\right.
$$
In order to recover $\hat \sigma$, we minimize the following Dirichlet least-squares fitting cost functional:
\begin{equation}\label{def:func}
J(\sigma) = \frac{1}{2} \sum_{j=1}^M\int_{\partial \Omega} \left|u_j -f_j\right|^2\, ds,
\end{equation}
for $\sigma \in L^{\infty}(\Omega,N,\lambda)$, where the state function $u_j$ solves
\begin{align}\label{eq:dir}
\left\{
\begin{array}{ll}
-\nabla \cdot (\sigma \nabla u_j) &= 0, \qquad \text{in } \Omega,\\[2mm]
\sigma \displaystyle \frac{\partial u_j}{\partial \nu} &=g_j, \qquad \text{on } \partial \Omega,
\end{array}
\right.
\end{align}
with the normalization condition
\begin{equation}
\int_{\partial \Omega} u_j \, ds= \int_{\partial \Omega} f_j\, ds.
\end{equation}

Notice that the functional $J(\sigma)$ depends on the values $\{\sigma_j\}_{j=1}^N$ and the partition $\P = \{P_j\}_{j=1}^N$, where $\sigma = \sum_{j=1}^{N} \sigma_j \chi_{P_j}$. The next subsections are devoted to the computation of the gradient of $J(\sigma)$ with respect to the above variables.

\subsection{Gradient of $J$ with respect to $\P$ -- the shape derivative}
The gradient of $J$ with respect to the partition $\P$ is actually a shape derivative. We thus want to compute the shape derivative $\langle \nabla_{\P} J, U \rangle $ of the functional $J$ at the partition $\P$ in the direction of a vector field $U = (U_1,\ldots,U_d)$. We assume that $U \in W^{1,\infty}(\R^d)$ and $U = 0$ in a neighborhood of $\partial \Omega$. This derivation has already been carried out in \cite{afraites2007shape,pantz2005sensibilite,Laurain2016} in the case of smooth inclusions, and in \cite{beretta2017differentiability} for a single polygonal inclusion. We present here a more general formula that is valid for any finite partition of a domain.

In order to compute the derivative, consider the transformation $\Phi_t( x) = x + t U( x): \R^d \to \R^d$, as smooth as $U$. We assume that $t \leq 1/(2\|U\|_{W^{1,\infty}})$ so that $\Phi_t^{-1}$ exists globally. For instance, $\Phi_t$ is a piecewise affine function that moves the nodes of the partition $\P$ (i.e., the vertices of the $P_j$'s). Note that $\Phi_t |_{\partial \Omega} = \mathrm{Id}$, $\mathrm{Id}$ denoting the identity mapping. This assumption is not a real restriction since in EIT the boundary $\partial \Omega$ is always assumed to be known and fixed. Nevertheless, it is crucial in the derivation of the main results of this section. 

The following matrix-valued functions will be useful in the following:
\begin{align}\label{def:A}
A(t) &= (D\Phi_t^{-1}) (D\Phi_t^{-1})^{T}\det(D\Phi_t),\\
\A &= \left. \frac{d A}{dt}\right|_{t=0} = \Div(U)I - (DU+DU^T),
\end{align}
where $D \Phi_t^{-1}$ and $DU$ are the Jacobian matrices of $\Phi_t^{-1}$ and $U$, respectively.
  
Consider the deformed partition $\P_t = \Phi_t(\P)$ and $\sigma_t = \sigma \circ \Phi_t^{-1}$ the corresponding conductivity. Let $G(t)$ be defined as
\begin{equation}\label{def:G}
G(t) = J(\sigma_t).
\end{equation}

Let now $u$ be any of the solutions $u_j$, with Neumann data $g$, where $g$ is any of the currents $g_j$, $j=1,\ldots,M$, and let $f$ be a boundary measurement corresponding to $g$. First, we need to study the material derivative of the solution $u$.

\begin{lem}\label{lem:mat}
The solution $u$ to problem \eqref{schr} has a material derivative $\dot u \in H^1(\Omega)$ that solves
\begin{equation}\label{eq:varfmat}
\int_\Omega \sigma \nabla \dot u \cdot \nabla w\,dx = - \int_\Omega \sigma \A \nabla u \cdot \nabla w\,dx, \qquad \forall w \in H^1(\Omega),
\end{equation}
with the normalization condition $\int_{\partial \Omega} \dot u\,ds = 0$.
\end{lem}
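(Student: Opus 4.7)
The plan is to introduce the pulled-back state $u^t(y) := u_t(\Phi_t(y))$, where $u_t$ solves \eqref{schr} with the transported conductivity $\sigma_t = \sigma \circ \Phi_t^{-1}$, and to show that the difference quotient $v^t := (u^t - u)/t$ converges strongly in $H^1(\Omega)$ to the Lax--Milgram solution of \eqref{eq:varfmat}. By definition this limit is the material derivative $\dot u$.

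The first step is to perform the change of variables $x = \Phi_t(y)$ in the weak formulation for $u_t$. Using $\sigma_t(\Phi_t(y)) = \sigma(y)$, the chain rule $(\nabla_x u_t)\circ\Phi_t = (D\Phi_t)^{-T}\nabla_y u^t$, the Jacobian $\det(D\Phi_t)$, and the fact that $\Phi_t|_{\partial\Omega} = \mathrm{Id}$ leaves the boundary term invariant, the pulled-back equation reads
$$\int_\Omega \sigma\, A(t)\, \nabla u^t \cdot \nabla w\,dx = \int_{\partial\Omega} g\, w\,ds, \qquad \forall w \in H^1(\Omega),$$
together with $\int_{\partial\Omega} u^t\,ds = 0$. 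Since $A(t)$ is symmetric and $A(t)\to I$ uniformly in $L^\infty(\Omega;\R^{d\times d})$ as $t \to 0$, its smallest eigenvalue is at least $1/2$ for $|t|$ small, so $\sigma A(t)$ is uniformly $(\lambda/2)$-elliptic and $u^t$ is well defined on $\{v\in H^1(\Omega):\int_{\partial\Omega} v\,ds=0\}$ by Lax--Milgram.

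Next, subtracting the weak formulation of $u$ from the previous equation and dividing by $t$ yields
$$\int_\Omega \sigma\, A(t)\, \nabla v^t \cdot \nabla w\,dx = -\int_\Omega \sigma\, \frac{A(t)-I}{t}\, \nabla u \cdot \nabla w\,dx,$$
and testing with $w = v^t$ produces a uniform $H^1$ bound. Let $\dot u$ denote the unique solution of \eqref{eq:varfmat} with zero boundary mean, which also exists by Lax--Milgram. Subtracting the two weak problems and rearranging gives
$$\int_\Omega \sigma\, A(t)\, \nabla (v^t - \dot u) \cdot \nabla w\,dx = -\int_\Omega \sigma \Bigl(\tfrac{A(t)-I}{t} - \A\Bigr) \nabla u \cdot \nabla w\,dx - \int_\Omega \sigma\, (A(t)-I)\, \nabla \dot u \cdot \nabla w\,dx.$$
Testing with $w = v^t - \dot u$ and invoking uniform coercivity controls $\|v^t - \dot u\|_{H^1}$ by a constant times $\bigl\|\tfrac{A(t)-I}{t} - \A\bigr\|_{L^\infty} + \|A(t)-I\|_{L^\infty}$, which vanishes as $t\to 0$.

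The main technical obstacle is the $L^\infty$-valued Taylor expansion $A(t) = I + t\A + o(t)$, which requires checking $\frac{d}{dt}\det(D\Phi_t)\big|_{t=0} = \Div(U)$ and $\frac{d}{dt}(D\Phi_t)^{-1}\big|_{t=0} = -DU$ together with remainders that are genuinely $o(t)$ in $L^\infty(\Omega)$, not merely pointwise. This uses only $U \in W^{1,\infty}(\R^d)$ and elementary matrix identities; everything else is a standard Lax--Milgram plus perturbation argument. The normalization $\int_{\partial\Omega}\dot u\,ds = 0$ is inherited from $\int_{\partial\Omega} u^t\,ds = \int_{\partial\Omega} u\,ds = 0$, which both hold because $\Phi_t$ is the identity on $\partial\Omega$.
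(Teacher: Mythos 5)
Your proof is correct, and the setup (pulling back $u_t$ via $\Phi_t$, writing the transported variational problem with the matrix $A(t)$, subtracting the equation for $u$ and dividing by $t$, and using uniform coercivity of $\sigma A(t)$ for small $t$ to bound the difference quotients) coincides with the paper's. Where you diverge is in the convergence step. The paper first extracts a weak $H^1$ limit of the bounded difference quotients, identifies it as the solution of \eqref{eq:varfmat} by passing to the limit in the quotient equation, and only afterwards upgrades to strong convergence through a separate energy argument (showing that the quadratic form of the quotients converges to that of $\dot u$). You instead define $\dot u$ a priori as the Lax--Milgram solution of \eqref{eq:varfmat} and prove strong convergence in one shot by subtracting the two weak formulations and testing with $v^t - \dot u$, which yields the explicit bound
\[
\|\nabla(v^t-\dot u)\|_{L^2(\Omega)} \lesssim \Bigl\|\tfrac{A(t)-I}{t}-\A\Bigr\|_{L^\infty} + \|A(t)-I\|_{L^\infty}.
\]
This is a genuine simplification: it avoids the subsequence extraction and the identification of the weak limit, and it delivers a convergence rate for free (indeed $A(t)=I+t\A+O(t^2)$ in $L^\infty$, since $D\Phi_t=I+tDU$ with $DU\in L^\infty$, so the right-hand side is $O(t)$). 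The price is that you must justify the $L^\infty$-valued expansion of $A(t)$ explicitly, but as you note this is elementary (Neumann series for $(I+tDU)^{-1}$ and the polynomial expansion of the determinant, both uniform because $\|DU\|_{L^\infty}<\infty$). The normalization argument is the same in both proofs, resting on $\Phi_t|_{\partial\Omega}=\mathrm{Id}$.
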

\begin{proof} We follow Step 1 and 2 of the proof of \cite[Theorem 3.1]{afraites2007shape}.
Let $u_t$ be the solution of
$$
\left\{
\begin{array}{ll}
-\nabla \cdot (\sigma_t \nabla u_t) &= 0, \qquad \text{in } \Omega,\\
\sigma_t \displaystyle \frac{\partial u_t}{\partial \nu} &= g, \qquad \text{on } \partial \Omega,
\end{array}
\right.
$$
with the normalization condition $\int_{\partial \Omega} u_t\,ds = \int_{\partial \Omega} f\,ds$.
Then the transported solution $\tilde u_t = u_t \circ \Phi_t$ solves the variational equation:
\begin{equation}\label{varf:utilde}
\int_\Omega \sigma A(t) \nabla \tilde u_t \cdot \nabla w\,dx -\int_{\partial \Omega}g w\,ds= 0, \qquad \forall w \in H^1(\Omega).
\end{equation}
Subtracting to \eqref{varf:utilde} the variational equation solved by $u$ and dividing by $t$, we find
\begin{equation}\label{eq:matder}
\int_\Omega \sigma A(t) \frac{\nabla \tilde u_t -\nabla u}{t} \cdot \nabla w\,dx = \int_\Omega \sigma\frac{I- A(t)}{t} \nabla u \cdot \nabla w\,dx, \quad \forall w \in H^1(\Omega).
\end{equation}
Using $\tilde u_t- u$ as test function we obtain
\begin{equation}
\frac{1}{2}\min_{x \in \Omega} \sigma(x) \left\|\frac{\nabla \tilde u_t - \nabla u}{t}\right\|_{L^2(\Omega)} \leq \left\| \frac{A(t) - I}{t} \right\|_{\infty} \| \nabla u \|_{L^2(\Omega)}.
\end{equation}
Thus, we have found that $(\tilde u_t - u)/t$ is bounded in $H^1(\Omega)$. Therefore, the sequence is weakly convergent in $H^1(\Omega)$ and its weak limit is the material derivative $\dot u$ of $u$. Passing to the limit in \eqref{eq:matder}, we find that $\dot u$ solves the desired variational formulation \eqref{eq:varfmat}. 

By the Lax-Milgram lemma, since the right-hand side is an $H^{-1}(\Omega)$ function (because of our assumptions on $U$), every solution to the variational problem \eqref{eq:varfmat} lies in $H^1(\Omega)$. In particular, the trace on $\partial \Omega$ is well defined.

Actually, we have strong convergence. Plugging $w = (\tilde u_t - u)/t$ into \eqref{eq:matder}, we obtain
\begin{equation}
\int_\Omega \sigma A(t) \nabla w \cdot \nabla w\, dx  = \int_\Omega \sigma\frac{I- A(t)}{t} \nabla  u \cdot \nabla w\, dx = B_{1,t}+B_{2,t},
\end{equation}
where 
\begin{equation}
B_{1,t} = \int_\Omega \sigma ( A(t)-I) \nabla  w \cdot \nabla w\, dx \; \text{ and } \; B_{2,t}= \int_\Omega \sigma \frac{I- A(t)}{t} \nabla  \tilde u_t \cdot \nabla w\, dx. 
\end{equation}
Thanks to the weak convergence of $(\tilde u_t - u)/t$, we obtain
\begin{equation}
B_{1,t} \to 0 \; \text{ and } \; B_{2,t} \to - \int_\Omega \sigma \A \nabla u \cdot \nabla \dot u\,dx \quad \text{as } t \to 0.
\end{equation}
Now, the variational formulation \eqref{eq:varfmat} yields $B_{2,t} \to \displaystyle \int_\Omega \sigma \nabla \dot u \cdot \nabla \dot u\,dx$.
So we have found that $\displaystyle \frac{\nabla \tilde u_t -\nabla u}{t}$ converges strongly to $\nabla \dot u$ in $L^2(\Omega)$. Using the normalization conditions for $\dot u$, $u$ and $\tilde u_t$ (which coincides with $u_t$ on $\partial \Omega$), we get the strong convergence of $\displaystyle \frac{\tilde u_t - u}{t}$ to $\dot u$ in $H^1(\Omega)$, via the Poincar\'e inequality. 
\end{proof}


Now we can derive the formula for the shape derivative of the functional $J$.
\begin{prop}
We have
\begin{equation}\label{eq:shape}
\langle \nabla_{\P} J , U \rangle = \sum_{j=1}^M \int_\Omega \sigma \A \nabla u_j \cdot \nabla z_j\,dx,
\end{equation}
where $z_j$ solves 
\begin{equation} \label{eq:adj_}
\left\{
\begin{array}{ll}
-\nabla \cdot (\sigma \nabla z_j) &= 0, \qquad \text{in } \Omega,\\[2mm]
\sigma \displaystyle \frac{\partial z_j}{\partial \nu} &= f_j-u_j, \qquad \text{on } \partial \Omega,
\end{array}
\right.
\end{equation}
with the normalization $\int_{\partial \Omega}z_j\,ds = \int_{\partial \Omega} f_j\,ds$.
\end{prop}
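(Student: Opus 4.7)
The plan is to differentiate $G(t) = J(\sigma_t)$ at $t=0$ and convert the resulting boundary integral into the claimed volume integral via an adjoint argument combined with Lemma~\ref{lem:mat}. The crucial simplification comes from the assumption $\Phi_t|_{\partial\Omega}=\mathrm{Id}$: if $u_{j,t}$ denotes the solution of the conductivity equation with conductivity $\sigma_t$ and $\tilde u_{j,t}=u_{j,t}\circ\Phi_t$ is its pullback, then $u_{j,t}|_{\partial\Omega}=\tilde u_{j,t}|_{\partial\Omega}$ as traces. Consequently
\[
G(t)=\tfrac12\sum_{j=1}^M\int_{\partial\Omega}|\tilde u_{j,t}-f_j|^2\,ds,
\]
which is differentiable in $t$ directly in terms of the material derivative, with no extra shape-derivative correction on $\partial\Omega$.

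First I would use the strong $H^1$-convergence of $(\tilde u_{j,t}-u_j)/t$ to $\dot u_j$ from Lemma~\ref{lem:mat}, together with the continuity of the trace operator $H^1(\Omega)\to H^{1/2}(\partial\Omega)$, to justify differentiation under the boundary integral and obtain
\[
G'(0)=\sum_{j=1}^M\int_{\partial\Omega}(u_j-f_j)\,\dot u_j\,ds.
\]
Next I would test the weak form of the adjoint problem \eqref{eq:adj_} against $w=\dot u_j\in H^1(\Omega)$, which gives
\[
\int_\Omega \sigma\nabla z_j\cdot\nabla \dot u_j\,dx=\int_{\partial\Omega}(f_j-u_j)\,\dot u_j\,ds,
\]
so that $G'(0)=-\sum_{j}\int_\Omega \sigma\nabla z_j\cdot\nabla \dot u_j\,dx$.

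Finally I would eliminate $\dot u_j$ by using Lemma~\ref{lem:mat} with test function $w=z_j\in H^1(\Omega)$, which yields
\[
\int_\Omega \sigma\nabla \dot u_j\cdot\nabla z_j\,dx=-\int_\Omega \sigma\,\A\,\nabla u_j\cdot\nabla z_j\,dx.
\]
Substituting this into the previous expression produces $G'(0)=\sum_{j=1}^M\int_\Omega\sigma\,\A\,\nabla u_j\cdot\nabla z_j\,dx$, which is precisely \eqref{eq:shape}, since by definition $\langle\nabla_\P J,U\rangle=G'(0)$.

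The main obstacle I anticipate is purely technical: justifying the differentiation of the boundary term, i.e.\ showing that $t^{-1}\int_{\partial\Omega}(\tilde u_{j,t}-u_j)(\tilde u_{j,t}+u_j-2f_j)\,ds$ converges to $2\int_{\partial\Omega}(u_j-f_j)\dot u_j\,ds$. This needs the strong $H^1$-convergence provided by Lemma~\ref{lem:mat} (and hence strong $L^2(\partial\Omega)$ convergence of the traces), together with the uniform boundedness of $\tilde u_{j,t}$ in $H^1(\Omega)$. The adjoint/material-derivative manipulation itself is then formally a clean double integration-by-parts, relying only on the fact that both $z_j$ and $\dot u_j$ lie in $H^1(\Omega)$ so each can be used as a test function in the other's variational identity. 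Because no smoothness of the partition is invoked anywhere in this chain, the argument works for any polytopal partition $\P$ in $d=2,3$.
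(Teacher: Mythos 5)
Your proposal is correct and follows essentially the same route as the paper's proof: rewrite $G(t)$ on $\partial\Omega$ in terms of the pullback $\tilde u_{j,t}$ using $\Phi_t|_{\partial\Omega}=\mathrm{Id}$, pass to the limit in the difference quotient via the strong $H^1$-convergence of Lemma~\ref{lem:mat} to get $G'(0)=\sum_j\int_{\partial\Omega}(u_j-f_j)\dot u_j\,ds$, and then eliminate $\dot u_j$ by testing the adjoint equation with $\dot u_j$ and the material-derivative equation with $z_j$. The technical point you flag (justifying the limit of the boundary difference quotient) is handled in the paper exactly as you describe.
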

\begin{proof}
We need to compute the derivative $\displaystyle \frac{d G}{dt}|_{t=0}$, where $G$ is defined in \eqref{def:G}. Let $u_{j,t}, \tilde u_{j,t}$ be defined as $u_t, \tilde u_t$ in the proof of Lemma \ref{lem:mat}, with Neumann data $g_j$. By the assumption that $\Phi_t|_{\partial \Omega} = \mathrm{Id}$, we have
$$ G(t) = \frac 1 2 \sum_{j=1}^M \int_{\partial \Omega}(u_{j,t}-f_j)^2\,ds =   \frac 1 2 \sum_{j=1}^M \int_{\partial \Omega}(\tilde u_{j,t} -f_j)^2\,ds.$$
Note that
\begin{equation}\label{eq:rapg}
\frac{G(t)-G(0)}{t}=\sum_{j=1}^M \int_{\partial \Omega} \left( \frac{\tilde u_{j,t}-u_j}{t}\right)\left( \frac{\tilde u_{j,t}+u_j}{2} -f_j\right)\,ds.
\end{equation}
Using the strong convergence $\displaystyle \frac{\tilde u_{j,t}-u_j}{t} \to \dot u_j$ in $H^1(\Omega)$ and the convergence $\tilde u_{j,t} \to u_j$ in ${L^2(\partial \Omega)}$ (which follows from the fact that $\tilde u_{j,t} = u_{j,t}$ on $\partial \Omega$ and $u_{j,t} \to u_j$ in $H^1(\Omega)$), we can pass to the limit as $t \to 0$ in \eqref{eq:rapg}, thus obtaining 
\begin{align*}
\langle \nabla_{\P} J , U \rangle  = \frac{dG}{dt}|_{t=0} = \sum_{j=1}^M \int_{\partial \Omega} (u_j-f_j)\dot u_j\,ds,
\end{align*}
where $\dot u_j$ is the material derivative of $u_j$.
Using the variational formulation of the adjoint $z_j$, 
\begin{equation}\label{eq:varfadj}
\int_{\Omega} \sigma \nabla z_j \cdot \nabla w\,dx + \int_{\partial \Omega} (u_j-f_j) w\,d\sigma = 0, \qquad \forall w \in H^1(\Omega),
\end{equation}
and of the material derivative, \eqref{eq:varfmat}, we find
\begin{align*}
\langle \nabla_{\P} J , U \rangle  = - \sum_{j=1}^M \int_\Omega \sigma \nabla z_j \cdot \nabla \dot u_j\,dx = \sum_{j=1}^M \int_\Omega \sigma \A \nabla u_j \cdot \nabla z_j\,dx, 
\end{align*}
which is the desired formula.
\end{proof}\smallskip

We now recall also the gradient of $J$ with respect to $\{ \sigma_j \}_{j=1}^N$, that will be used in the reconstruction algorithm:

\begin{equation}\label{eq:gradc}
\frac{d J}{d \sigma_j} = \sum_{k=1}^M \int_{P_j} \nabla u_k \cdot \nabla z_k\,dx, \quad j = 1,\ldots,N,
\end{equation}
where $z_k$ solves \eqref{eq:adj_} with a normalization (e.g., $\displaystyle \int_{\partial \Omega}z_k\,ds = \int_{\partial \Omega} f_k\,ds$).

\subsection{Equivalence of the distributed and of the boundary integral formulas in the case of a single polygonal inclusion}
We emphasize that, in the case of a single poly\-gonal inclusion, the shape derivative of the functional $J(\sigma)$ has been computed rigorously in \cite{beretta2017differentiability}, and expressed as an integral on the boundary of such an inclusion.  It is unclear if this boundary representation of the shape derivative is still valid in the case of an arbitrary partition.

Let $P$ be a polygon strictly contained in $\Omega$ and let
\[
u^+=u|_{\Omega\backslash \bar P},\,\,\,\, z^+=z|_{\Omega\backslash \bar P}.
\]
and 
\[
u^-=u|_{ P},\,\,\,\, z^-=z|_{P}.
\]

\begin{prop}
Assume $\Omega=P\cup(\Omega\backslash P)$, with $P$ a polygon strictly contained in $\Omega$, and let $\sigma|_{P} = k$ and $\sigma = 1$ outside $P$.  Then, we have the following equivalent formulas for the shape derivative of the functional $J$ at $P$ in the direction given by $U$:
\begin{align*}
\left.\frac{d G}{dt}\right|_{t=0} &=\sum_{j=1}^M \int_{\Omega} \sigma( x) \A \nabla u_j( x) \cdot \nabla z_j(x)\,dx \\
& = (k-1) \sum_{j=1}^M\int_{\partial P}\left(\frac{1}{k}\frac{\partial u_j^+}{\partial \nu}\frac{\partial z^+_j}{\partial \nu}+\nabla_{\tau}u_j \cdot\nabla_{\tau}z_j\right)U_{\nu}\,ds, 
\end{align*}
where $\nu$ is the unit outward normal vector to $\partial P$, $\nabla_{\tau}$ is the tangential gradient, $U_{\nu} = U\cdot \nu$, functions $u_j,z_j$ satisfy \eqref{eq:dir} and \eqref{eq:adj_}, respectively.
\end{prop}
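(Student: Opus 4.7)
The strategy is to rewrite the distributed integrand $\sigma\mathcal{A}\nabla u_j\cdot\nabla z_j$ as a divergence on each subregion and then pass to a boundary integral via the divergence theorem, being careful about the vertex singularities. The key algebraic identity I would derive first is the Rellich-type formula: for functions $u,z$ harmonic in an open set where $\sigma$ is constant,
\begin{equation*}
\mathcal{A}\nabla u\cdot\nabla z \;=\; \Div V(u,z,U), \qquad V(u,z,U) := (\nabla u\cdot\nabla z)\,U - (U\cdot\nabla u)\,\nabla z - (U\cdot\nabla z)\,\nabla u.
\end{equation*}
This follows from a direct computation, expanding $\Div V$ with the product rule and using the symmetry of $D^2 u$ and $D^2 z$ together with $\Delta u=\Delta z=0$; the identity $(DU^T\nabla u)\cdot\nabla z=(DU\nabla z)\cdot\nabla u$ reassembles the Jacobian terms into $(DU+DU^T)\nabla u\cdot\nabla z$ to match $\mathcal{A}$.

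Next, I would split $\int_\Omega\sigma\,\mathcal{A}\nabla u_j\cdot\nabla z_j\,dx=k\int_P\mathcal{A}\nabla u_j^-\cdot\nabla z_j^-\,dx+\int_{\Omega\setminus\bar P}\mathcal{A}\nabla u_j^+\cdot\nabla z_j^+\,dx$, apply the identity on each piece, and use the divergence theorem. Since $U$ vanishes near $\partial\Omega$, only the contributions on $\partial P$ survive, giving $\int_{\partial P}(kV^--V^+)\cdot\nu\,ds$. I would then decompose $U=U_\nu\nu+U_\tau$ and $\nabla u=(\partial_\nu u)\nu+\nabla_\tau u$ (similarly for $z$) on $\partial P$, and plug these into $V\cdot\nu$. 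After cancellation one finds
\begin{equation*}
V\cdot\nu \;=\; U_\nu\nabla_\tau u\cdot\nabla_\tau z - U_\nu(\partial_\nu u)(\partial_\nu z) - (\partial_\nu z)(\nabla_\tau u\cdot U_\tau) - (\partial_\nu u)(\nabla_\tau z\cdot U_\tau).
\end{equation*}
The transmission conditions on $\partial P$ — tangential continuity $\nabla_\tau u^+=\nabla_\tau u^-$, $\nabla_\tau z^+=\nabla_\tau z^-$, and flux continuity $k\partial_\nu u^-=\partial_\nu u^+$, $k\partial_\nu z^-=\partial_\nu z^+$ — make the $U_\tau$ terms in $kV^-\cdot\nu$ and $V^+\cdot\nu$ cancel, and reduce the remaining combination to $(k-1)U_\nu\bigl[\nabla_\tau u\cdot\nabla_\tau z+\tfrac{1}{k}(\partial_\nu u^+)(\partial_\nu z^+)\bigr]$, which is exactly the boundary formula claimed.

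The main obstacle is that $u_j,z_j$ are in general only $H^1$ near the vertices of $P$, and the Rellich identity formally involves second derivatives via $\Div V$, so neither the pointwise identity nor the divergence theorem applies at the corners. I would handle this by excising small balls $B_\varepsilon(v)$ around each vertex $v$ of $P$, applying the identity and the divergence theorem on $P\setminus\bigcup B_\varepsilon(v)$ and $(\Omega\setminus\bar P)\setminus\bigcup B_\varepsilon(v)$ where $u_j,z_j$ are smooth, and then letting $\varepsilon\to 0$. Using the known vertex regularity $|\nabla u_j|,|\nabla z_j|\lesssim r^{\alpha-1}$ with $\alpha>1/2$ (the same estimates underlying \cite{beretta2017differentiability}), the auxiliary boundary terms on $\partial B_\varepsilon(v)$ are bounded by $\varepsilon^{2\alpha-1}\|U\|_\infty\to 0$. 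A parallel interior dominated-convergence argument shows that the distributed integral on the excised domain converges to the one on $\Omega$, and on $\partial P\setminus\bigcup B_\varepsilon(v)$ the boundary integrand converges to the claimed expression in $L^1$, yielding the equivalence.
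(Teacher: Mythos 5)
Your proposal is correct and follows essentially the same route as the paper: the same Rellich-type divergence identity (your $V$ is the negative of the paper's vector field $b$), the same excision of $\varepsilon$-balls around the vertices with the $O(\varepsilon^{2\alpha-1})$, $\alpha>1/2$ gradient estimates to kill the remainder terms, and the same use of the transmission conditions to reduce $(kV^--V^+)\cdot\nu$ to $(k-1)U_\nu\bigl[\nabla_\tau u\cdot\nabla_\tau z+\tfrac{1}{k}\partial_\nu u^+\partial_\nu z^+\bigr]$. The only cosmetic difference is that you spell out the tangential/normal decomposition of $V\cdot\nu$ explicitly, which the paper leaves implicit.
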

\begin{proof}
We can assume, without loss of generality, $M=1$ and denote by $u$ and $z$  the solutions corresponding to the datum $f$. Let $B_\varepsilon$ be the union of balls of radius $\varepsilon$ centered at each vertex of $P$,
and let us consider the splitting 
\[
\int_{\Omega} \sigma \A \nabla u \cdot \nabla z\,dx=\int_{\Omega\backslash B_{\varepsilon}} \sigma \A \nabla u \cdot \nabla z\,dx+\int_{B_{\varepsilon}} \sigma \A \nabla u \cdot \nabla z\,dx.
\]
Denote by 
\[
\Omega_{\varepsilon}^+=\Omega\backslash\overline{P\cup B_{\varepsilon}},\,\,\, \Omega_{\varepsilon}^-=P\backslash\overline{ B_{\varepsilon}},
\]
and observe that, by standard regularity results (see, for example, \cite{Ito2008}), it can be shown that $u^+,z^+\in H^2(\Omega_{\varepsilon}^+)$, and  $u^-,z^-\in H^2(\Omega_{\varepsilon}^-)$. Then,  applying Green's formula in $\Omega_{\varepsilon}^{\pm}$, observing that $U$ has compact support in $\Omega$, and using the following identity
\begin{align*}
\A \nabla u \cdot \nabla z &= -\Div(b) + (U\cdot \nabla u) \Delta z +(U\cdot \nabla z) \Delta u\\
&=-\Div(b) \qquad \text{in } \Omega_{\varepsilon}^+\cup\Omega_{\varepsilon}^- =\Omega\backslash \bar B_{\varepsilon},
\end{align*}
where 
$$b = (U\cdot \nabla u) \nabla z + (U\cdot \nabla z) \nabla u- (\nabla u \cdot \nabla z) U,$$ 
we easily derive
$$
\int_{\Omega\backslash \bar B_{\varepsilon}} \sigma \A \nabla u \cdot \nabla z\,dx=\int_{\partial P\backslash \bar B_{\varepsilon}}[\sigma b]\cdot\nu\,ds+\int_{\partial B_{\varepsilon}}\sigma b\cdot\nu\,ds,
$$
where we use notation $[f] = f^+-f^-$ to denote the jump off across $\partial P$. Using the transmission conditions satisfied by $u$ and $z$ across $\partial P$, we end up with the following relation
\begin{align} \nonumber
\int_{\Omega\backslash \bar B_{\varepsilon}} &\sigma \A \nabla u \cdot \nabla z\,dx = 
\quad (k-1) \int_{\partial P}\left(\frac{1}{k}\frac{\partial u^+}{\partial \nu}\frac{\partial z^+}{\partial \nu}+\nabla_{\tau}u \cdot\nabla_{\tau}z\right)U_{\nu}\,ds\\ \nonumber
&-(k-1) \int_{\partial P\cap \bar B_{\varepsilon}}\left(\frac{1}{k}\frac{\partial u^+}{\partial \nu}\frac{\partial z^+}{\partial \nu}+\nabla_{\tau}u \cdot\nabla_{\tau}z\right)U_{\nu}\,ds+\int_{\partial B_{\varepsilon}}\sigma b\cdot\nu\,ds,
\end{align}
where $U_{\nu}=U\cdot \nu$.
Hence,
\begin{align*} \nonumber
\int_{\Omega} &\sigma \A \nabla u \cdot \nabla z\,dx = 
\quad (k-1) \int_{\partial P}\left(\frac{1}{k}\frac{\partial u^+}{\partial \nu}\frac{\partial z^+}{\partial \nu}+\nabla_{\tau}u \cdot\nabla_{\tau}z\right)U_{\nu}\,ds\\ 
&\underbrace{-(k-1) \int_{\partial P\cap \bar B_{\varepsilon}}\left(\frac{1}{k}\frac{\partial u^+}{\partial \nu}\frac{\partial z^+}{\partial \nu}+\nabla_{\tau}u \cdot\nabla_{\tau}z\right)U_{\nu}\,ds}_{R_1}\\
&+\underbrace{\int_{\partial B_{\varepsilon}}\sigma b\cdot\nu\,ds}_{R_2}+\underbrace{\int_{B_{\varepsilon}} \sigma \A \nabla u \cdot \nabla z\,dx}_{R_3}\\
&=(k-1) \int_{\partial P}\left(\frac{1}{k}\frac{\partial u^+}{\partial \nu}\frac{\partial z^+}{\partial \nu}+\nabla_{\tau}u \cdot\nabla_{\tau}z\right)U_{\nu}\,ds+R_1+R_2+R_3.
\end{align*}
From \cite{bellout1992stability}, we have the following upper bounds of the gradients of $u$ and $z$ in a neighbourhood of the vertices:
\begin{equation}\label{eq:gradest}
|\nabla u (x)|\leq C |x-\tilde x|^{\alpha-1}, \quad |\nabla z (x)| \leq C |x-\tilde x|^{\alpha-1}, 
\end{equation}
for some constant $C>0$, $\alpha > 1/2$ and $x$ sufficiently close to a vertex $\tilde x$, with $x\neq \tilde x$.  This, jointly with the regularity assumptions on the vector field $U$, implies that 
\begin{align*} \nonumber
\int_{\Omega} &\sigma \A \nabla u \cdot \nabla z\,dx = 
(k-1) \int_{\partial P}\left(\frac{1}{k}\frac{\partial u^+}{\partial \nu}\frac{\partial z^+}{\partial \nu}+\nabla_{\tau}u \cdot\nabla_{\tau}z\right)U_{\nu}\,ds+O(\epsilon^{2\alpha-1}),
\end{align*}
and, since $\alpha>1/2$, letting $\epsilon\rightarrow 0$ in the last equation, we finally obtain
\[
\int_{\Omega} \sigma \A \nabla u \cdot \nabla z\,dx=(k-1) \int_{\partial P}\left(\frac{1}{k}\frac{\partial u^+}{\partial \nu}\frac{\partial z^+}{\partial \nu}+\nabla_{\tau}u \cdot\nabla_{\tau}z\right)U_{\nu}\,ds,
\]
which ends the proof.
\end{proof}

\begin{rem}
It is straightforward to check that the boundary formula derived in \cite{beretta2017differentiability} (and hence also the last proposition) extends to the case of a finite number of well separated, polygonal inclusions at a positive distance to the boundary of $\Omega$.
\end{rem}

\begin{rem}
The formula for the shape derivative of the functional $J$ can also be obtained following the Lagrangian approach as in \cite{Laurain2016}. 
In fact, defining the following Lagrangian 
\begin{align} \nonumber 
\tilde \L (u,z,t,U) &= \frac{1}{2}\int_{\partial \Omega} \left|u -f\right|^2\,d\sigma+ \int_{\Omega} \sigma A(t)  \nabla u \cdot \nabla z\,dx - \int_{\partial \Omega} g z\,d\sigma,
\end{align}
where $A(t)$ is defined as in \eqref{def:A}, while functions $u$ and $z$ satisfy \eqref{eq:dir} and \eqref{eq:adj_}, respectively corresponding to a given Neumann datum $g$, 
it is possible to show that 
\[
\frac{dG}{dt}|_{t=0}= \frac{\partial}{\partial t} \tilde \L(u,z,t,U)|_{t=0}= \int_{\Omega} \sigma(x) \A \nabla u(x) \cdot \nabla z(x)\,dx.
\]
\end{rem}

\section{Reconstruction algorithm}
We use the descent gradient method to solve the minimization problem involving the functional $J$ in \eqref{def:func}. Although the algorithm holds in dimension 2 and 3, we focus on the planar case, since computational experiments are in 2D.	

The algorithm adopted in this paper differs from reconstruction algorithms available in the literature (\cite{Giacomini2015,Giacomini2017})
because of two major features, which are separately addressed in the next sections.

\subsection{Regularization} The first novelty adopted in the reconstruction algorithm consists in 
increasing or decreasing the number of vertices of the polygons in the partition in order to impose some \textit{regularity} on the reconstruction. This simple trick considerably improves the reconstruction quality, smoothing out artifacts and irregularities that often characterize EIT reconstructions (see Section 4 below for practical examples).

\subsection{Construction of the descent direction} The second distinguishing feature of the adopted reconstruction algorithm 
concerns the computation of the descent direction associated with the partition vertices. A standard approach consists in solving a discretized version of the equation
\begin{equation}\label{eq:grad}
\langle \theta^k, \delta \theta \rangle + \langle  \nabla_{\P} J(\sigma^k), \delta \theta \rangle = 0 \qquad \text{for every } \delta \theta \in X,
\end{equation}
where $\theta^k \in X$ is the descent direction at iteration $k$, $\sigma^k$ is the conductivity obtained at the iteration $k$ (see Section \ref{sec:recalg} for a precise definition), while $X \subset W^{1,\infty}(\Omega,\R^2)$. There are several approaches to discretize and solve equation \eqref{eq:grad}. We refer to \cite{Giacomini2017,Dogan2007} for a thorough discussion on this subject. Here we propose a different, more straightforward, derivation. Since the partition is defined by its vertices, we compute the descent direction for each vertex individually, and then we update the partition. The formula we use at iteration $k$ to define the descent direction, $\theta_l^k \in \R^2$, for a given vertex, $V_l^k$, $l=1,\ldots,N^k_V$, with $N^k_V$ the number of the partition vertices at the iteration $k$, is:
\begin{equation} \label{eq:desc}
\theta_l^k = -\left(\langle \nabla_{\P} J(\sigma^k), U^k_{l,1} \rangle, \langle \nabla_{\P} J(\sigma^k), U^k_{l,2} \rangle \right),
\end{equation}
where the vector fields $U^k_{l,1},U^k_{l,2}$ are chosen as follows:
\begin{itemize}
\item[(1)] $U^k_{l,1},U^k_{l,2} \in W^{1,\infty}(\Omega,\R^2)$ with  support strictly contained in $\Omega$
\item[(2)] $U^k_{l,1},U^k_{l,2}$ piecewise linear on the edges of the partition and such that
\begin{equation}\label{eq:hat}
U^k_{l,1}(V_j^k) = (\delta_{jl},0), \qquad U^k_{l,2}(V_j^k) = (0,\delta_{jl}),
\end{equation}
\end{itemize}
where $\delta_{jl}$ indicates the Kronecker delta. 
With this choice, the vector 
$$\left(\langle \nabla_{\P} J(\sigma^k), U^k_{l,1} \rangle, \langle \nabla_{\P} J(\sigma^k), U^k_{l,2} \rangle \right)$$ 
represents, at the iteration $k$, the gradient of the functional $J$ with respect to the position of a single vertex, $V_l^k$. 
In practice, in view of a finite element discretization, we choose the vector fields 
$U^k_{l,1}$ and $U^k_{l,2}$ as 
\begin{equation}
U^k_{l,1} = (\varphi^k_l,0), \qquad U^k_{l,2} = (0,\varphi^k_l),
\end{equation}
where $\varphi^k_l$ is the \textit{hat function} associated with the node $V^k_l$ of a coarse mesh that contains both the vertices and the edges of the partition $\P$ but with no additional nodes on the sides of the polygons. More precisely, $\phi^k_l (V^k_j) = \delta_{jl}$, and it is piecewise linear. Examples of coarse meshes adapted to a partition can be seen in every plot in Section 4. Thus, any other hat function corresponding to such a coarse mesh would produce the same results, since the shape derivative is supported on the edges of the partition.

We remark that we need to assume the knowledge of the number $N$ of polygons in the partition, since the algorithm is unable to make changes in the topology. A possible approach to overcome this limitation, using topological derivatives and a level set formulation, has been studied in \cite{Hintermuller2008}.\smallskip

\subsection{The reconstruction algorithm}\label{sec:recalg}
Let $N^k_V$ be the number of vertices of the partition $\P^k = \{ P^k_j\}_{j=1}^N$ at iteration $k$, and $\{ V^k_l \}_{l =1}^{N^k_V}$ be the corresponding set of vertices. Let $N^k_j$ be the number of vertices of the polygon $P^k_j$, for $j=1,\ldots,N$. At each iteration, we consider the conductivity $\sigma^k = \sum_{j=1}^N \sigma_j^k \chi_{P^k_j}$. In the algorithm we introduce the parameters $\delta_1, \delta_2 >0$ as threshold for the distances between consecutive vertices in the regularization step and a tolerance $tol >0$ is chosen to control the size of the shape gradient as a stopping criterion. Finally, the step sizes $\alpha^j_k > 0$ and $\beta_k >0$ can be fixed or obtained by line search. However, in all numerical experiments  below, we keep them fixed.

\begin{algorithm}[H]
\caption{Reconstruction algorithm}
Choose an initial guess $\{\sigma^0_j\}_{j=1}^N$, $\{V^0_l\}_{l=1}^{N^0_V}$; set $k = 0$ and iterate:
\begin{algorithmic}[1]
\State For all consecutive vertices $V_l^k,V_{l+1}^k$ in each polygon:
\State \qquad if $\|V_l^k-V_{l+1}^k\| < \delta_1$ then remove $V_j^k$;
\State \qquad  if $\|V_l^k-V_{l+1}^k\| > \delta_2$ then introduce the new vertex, $\frac{1}{2}(V_l^k + V_{l+1}^k)$;
\State Generate a coarse mesh corresponding to this new set of vertices identifying the partition $\P^k$, and the corresponding conductivity $\sigma^k$;
\State Compute the solutions of the state \eqref{eq:dir} and adjoint \eqref{eq:adj_} problem on a refined mesh;
\State Compute the gradient w.r.t. the coefficients $\frac{d J}{d \sigma_j}(\sigma^k)$, for $j = 1,\ldots,N$ via \eqref{eq:gradc};
\State Compute the descent directions $\theta_l^k$ via \eqref{eq:desc}, corresponding to each vertex, $V_l^k$, $l = 1,\ldots,N^k_V$;
\State Update the coefficients and the partition: $\sigma_j^{k+1} = \sigma_j^k -\alpha^j_k \frac{d J}{d \sigma_j}(\sigma^k)$, $V^{k+1}_l = V^{k}_l+ \beta_k \theta^k_l$;
\State If $\max_{l = 1,\ldots,N^k_V} \|\theta^k_l\| > tol$, set $k = k+1$ and repeat.
\end{algorithmic}
\end{algorithm}

The stopping criteria for the algorithm may be improved following the error analysis in \cite{Giacomini2017}. For the sake of simplicity, in the present work, we decided to focus on the new shape updates in the algorithm, neglecting other issues already investigated in earlier works. The 
inclusion of an advanced stopping criterion as well as of a robust line search algorithm for the step sizes constitute possible topics for a future investigation.

\section{Numerical tests}
In this section, we show the effectiveness of the proposed reconstruction scheme. A series of numerical experiments have been carried out in order to assess numerically that the algorithm enable us to recover simultaneously the partition and the values of the conductivity for some significant configurations.

For the sake of simplicity, we consider $\Omega = (0,1)^2$. For each test, we create a mesh adapted to the unknown conductivity that is used to provide the boundary data. For the reconstruction, a coarse mesh adapted to the approximate partition is generated at each iteration. A refined mesh is also constructed in order to compute the state and the adjoint problem.

\begin{figure}
\begin{picture}(300,100)
\put(-40,-10){\includegraphics[width=4cm]{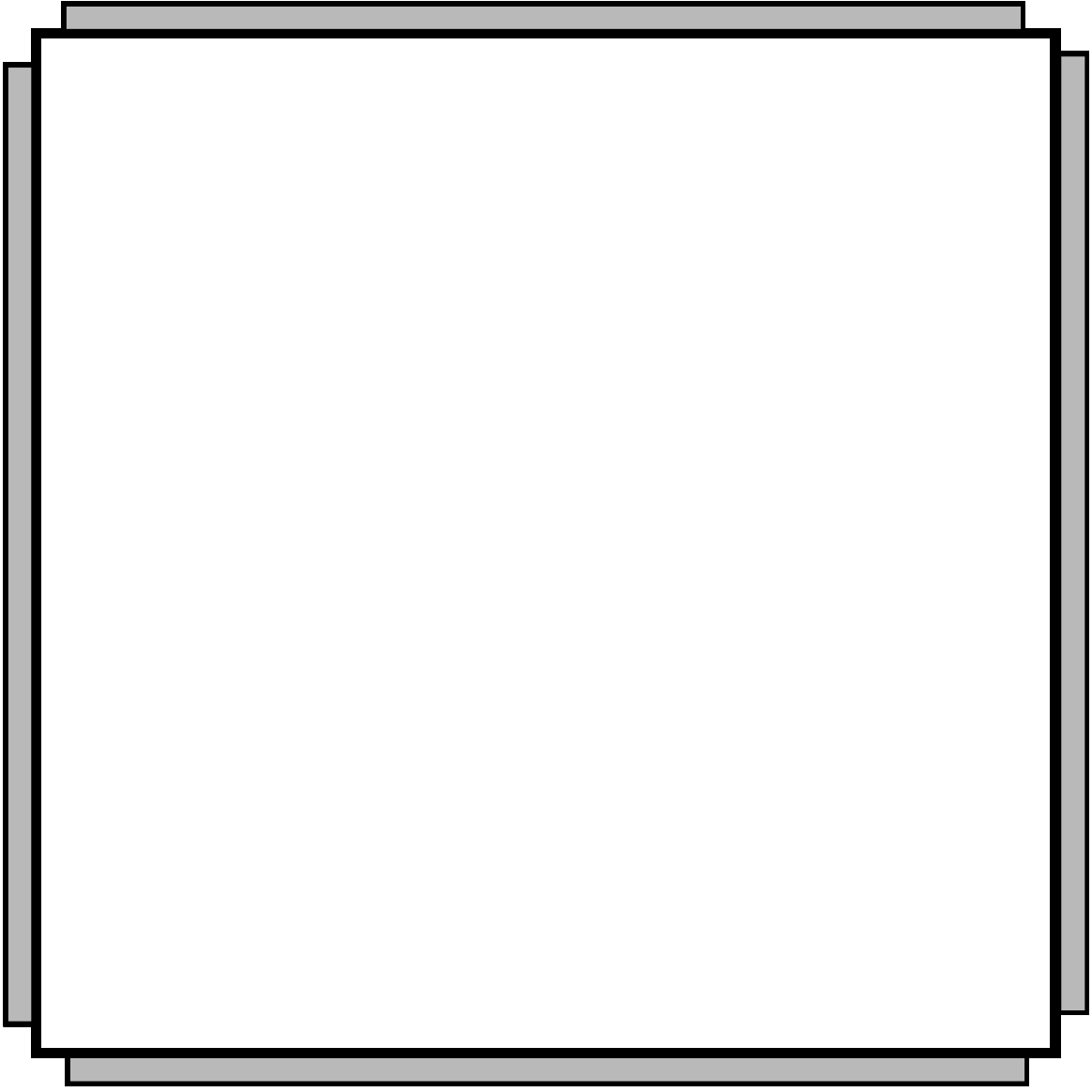}}
\put(100,-10){\includegraphics[width=4cm]{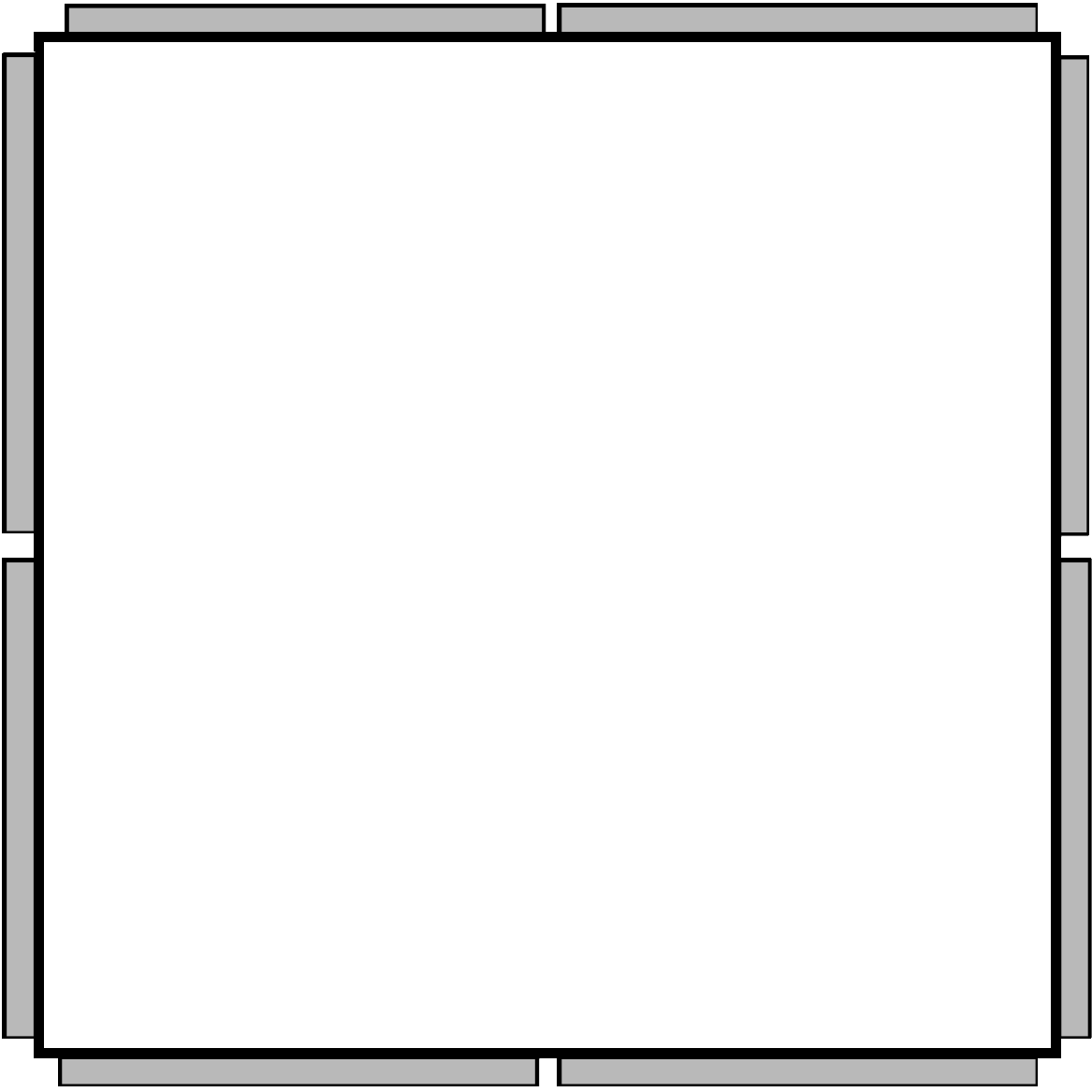}}
\put(240,-10){\includegraphics[width=4cm]{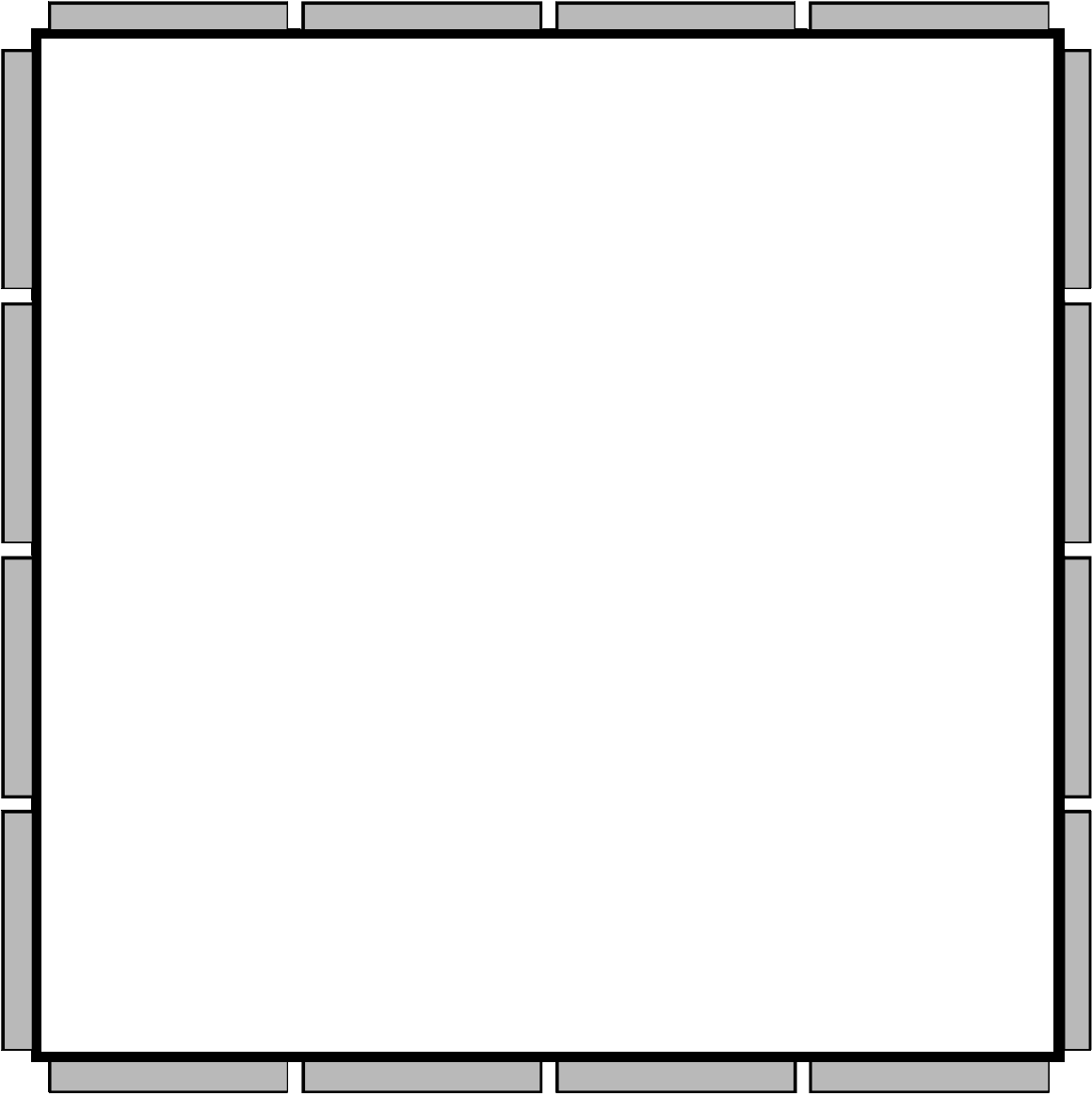}}
\end{picture}
\caption{\label{fig:elet} From left to right: electrodes (in gray) corresponding to the three data sets of 6, 28 and 120 boundary measurements, respectively.}
\end{figure}

The boundary data are generated in the following manner. We set $\sigma \frac{\partial u}{\partial \nu} = 1$ on one of the four sides of $\Omega$, $-1$ on another side and $0$ elsewhere. In this way, we obtain $6$ independent current patterns, corresponding to $4$ electrodes, one for each side. Then, we divide each side of $\Omega$ in half. We set $\sigma \frac{\partial u}{\partial \nu} = 1$ on one half, $-1$ on another half side and $0$ elsewhere. This gives $28$ independent current patterns, corresponding to $8$ electrodes. We iterate this procedure one more time. In this way, we construct sets of 6, 28 and 120 boundary data, corresponding to 4, 8 and 16 electrodes, respectively. See Figure \ref{fig:elet} for a scheme of the electrode position corresponding to the data sets.

We also add a uniform noise to the data. More precisely, given a noiseless boundary measurement $f_j \in H^{1/2}(\partial \Omega)$, $j=1, \ldots,M$, the noisy data $\tilde f_j$ is obtained by adding to $f_j$ a uniform noise in the following way:
\begin{equation}\notag
\tilde f_j (x) = f_j (x) +  \varepsilon \|f\|_{L^2(\partial \Omega)},
\end{equation}
where $x \in \partial \Omega$ is a boundary vertex of the mesh that generated $f_j$ and $\varepsilon$ is a uniform random real in $(-\gamma,\gamma)$, where $\gamma> 0$ is chosen according to the noise level.
To measure the noise level, we use the relative error on the boundary in the $L^2$-norm, that is the following quantity:
\begin{equation}\notag
\frac{ \sqrt{\sum_{j=1}^M \|\tilde f_j - f_j\|^2_{L^2(\partial \Omega)} }}{\sqrt{\sum_{j=1}^M \| f_j\|^2_{L^2(\partial \Omega)} }}.
\end{equation}

The regularization parameters, $\delta_1$ and $\delta_2$, are chosen experimentally. Since the initial guess is always a regular polygon (or a collection of regular polygons), we choose $\delta_1 = \alpha_1 \delta$, $\delta_2 = \alpha_2 \delta$, where $\delta$ is the length of the side of the initial guess, with $\alpha_1 < 1$, $\alpha_2 > 1.5$. This is done in order to have, at each iteration, a partition with edges of similar length.

All the computations are performed using FreeFem++ \cite{Hecht2012}.

\subsection{Shape reconstruction}

\begin{figure}
\begin{picture}(300,420)
\put(-60,220){\includegraphics[width=7cm]{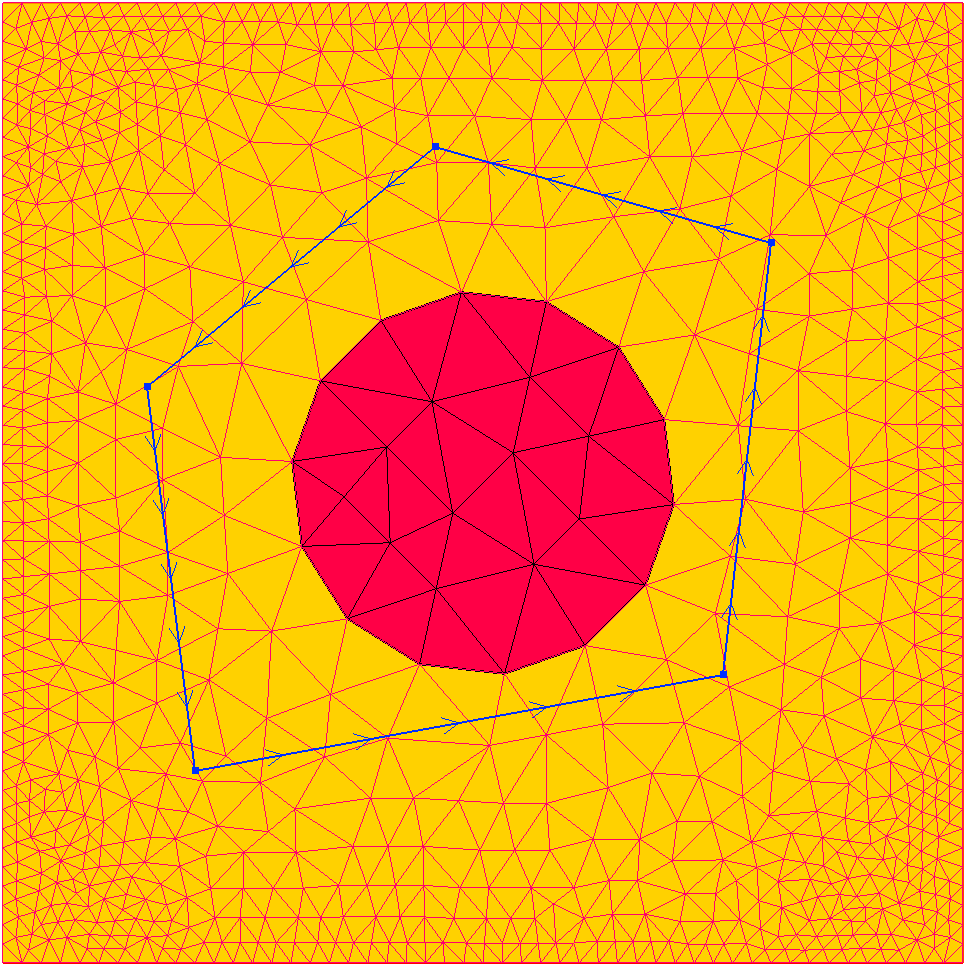}}
\put(170,220){\includegraphics[width=7cm]{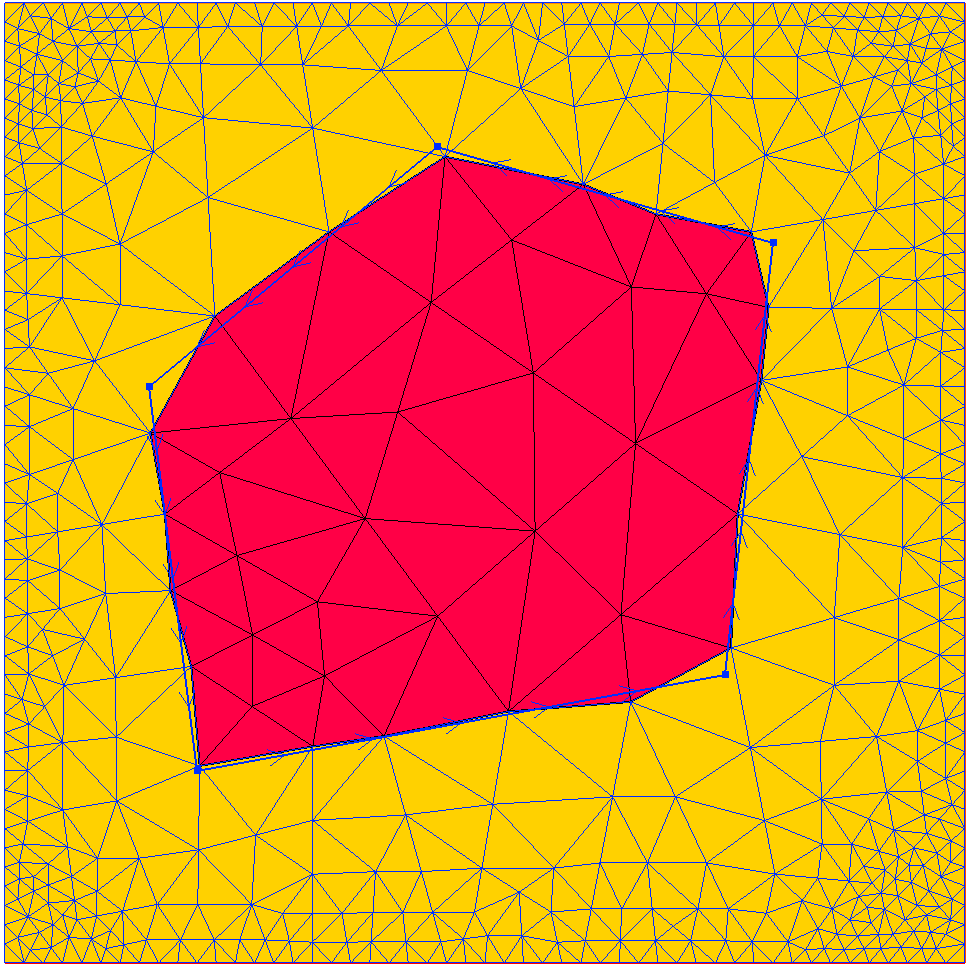}}
\put(-60,0){\includegraphics[width=7cm]{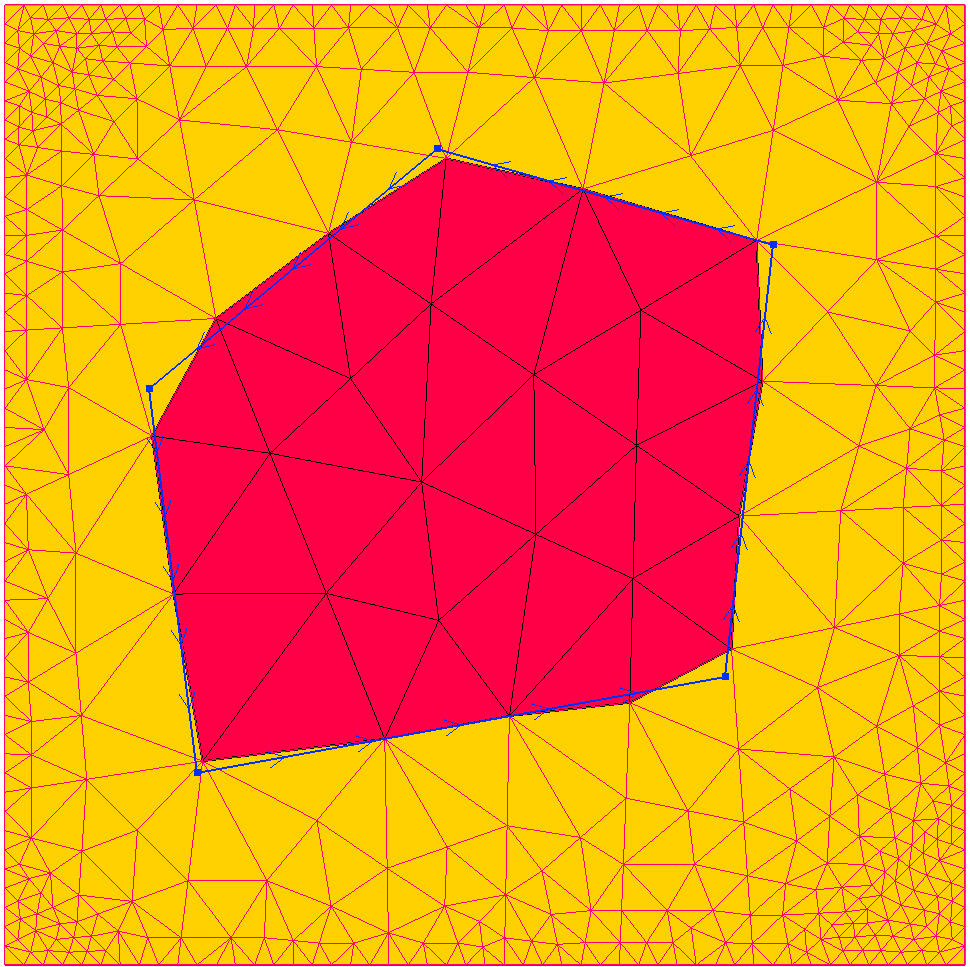}}
\put(170,0){\includegraphics[width=7cm]{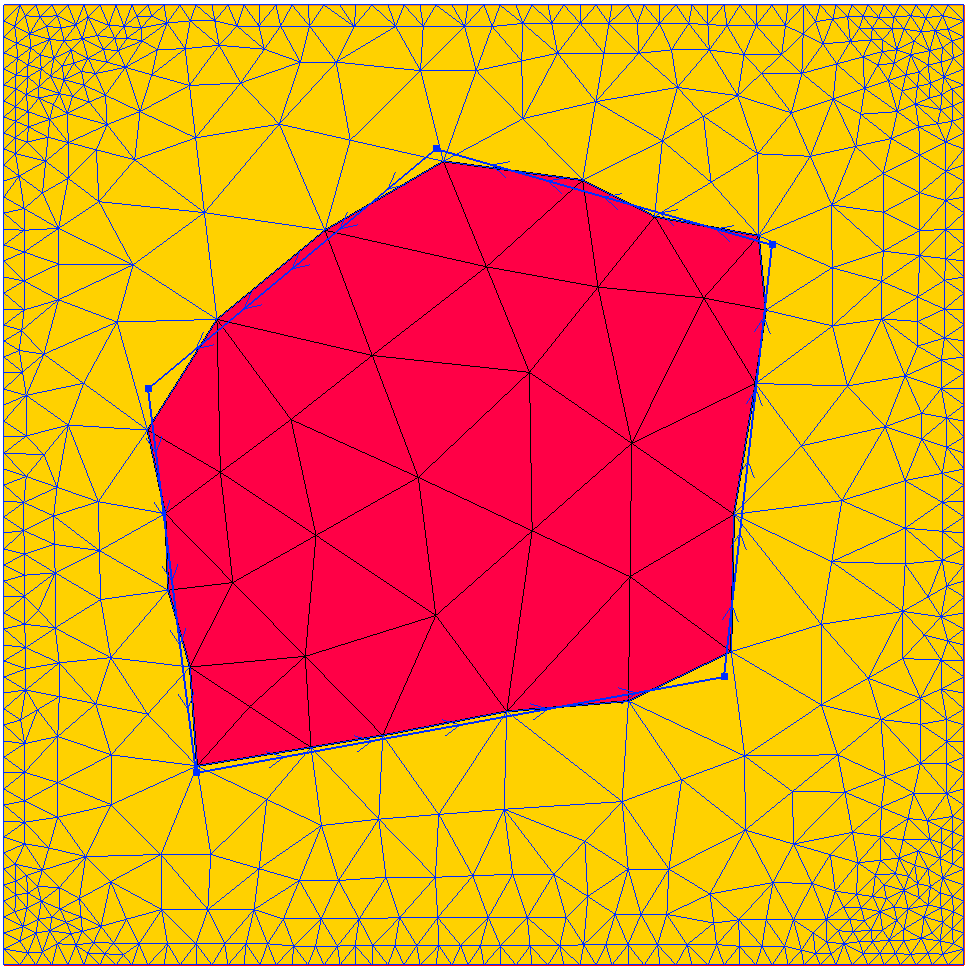}}
\end{picture}
\caption{\label{fig:poly_6}Reconstruction of a piecewise constant conductivity on an asymmetric pentagon starting from 6 measurements. The blue line represents the target shape. The values of the conductivity are known. Initial guess (top-left). Reconstruction from noiseless data with (top-right) and without (bottom-left) the regularization step. Bottom Right: reconstruction with regularization for 3\% noisy data.}
\end{figure}
\begin{figure}
\begin{picture}(300,420)
\put(-60,220){\includegraphics[width=7cm]{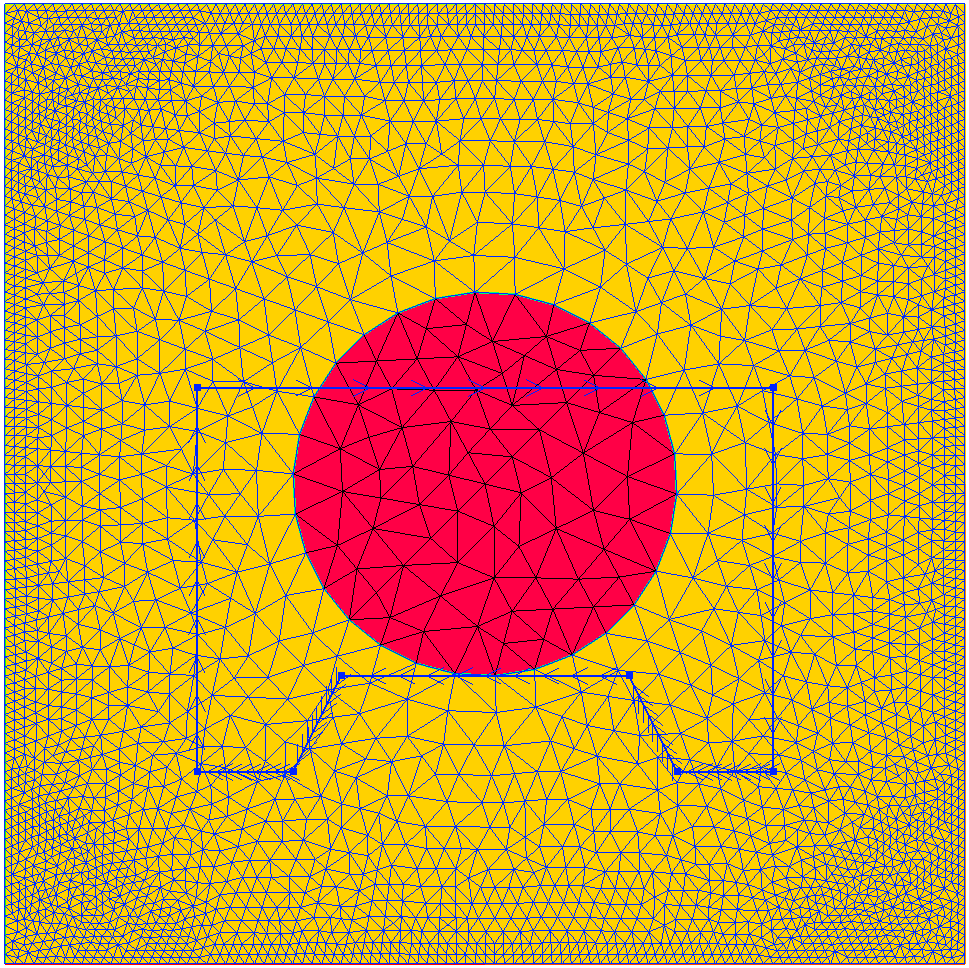}}
\put(170,220){\includegraphics[width=7cm]{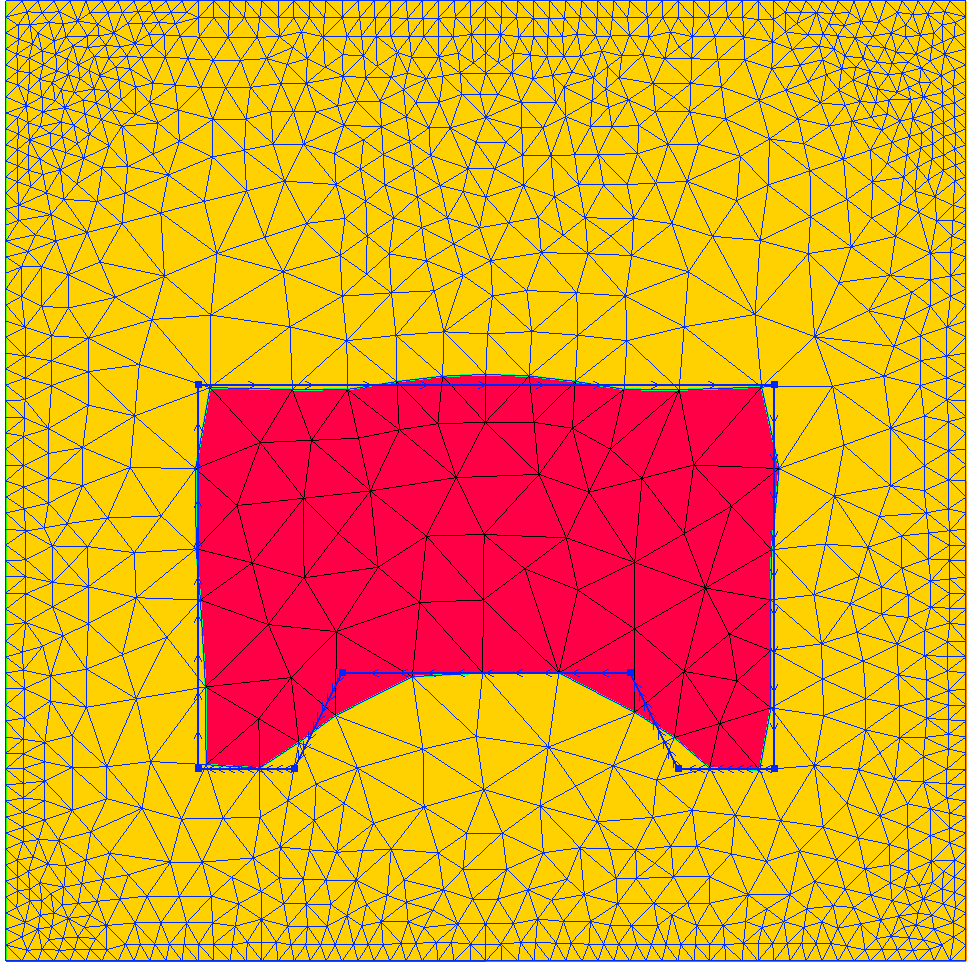}}
\put(-60,0){\includegraphics[width=7cm]{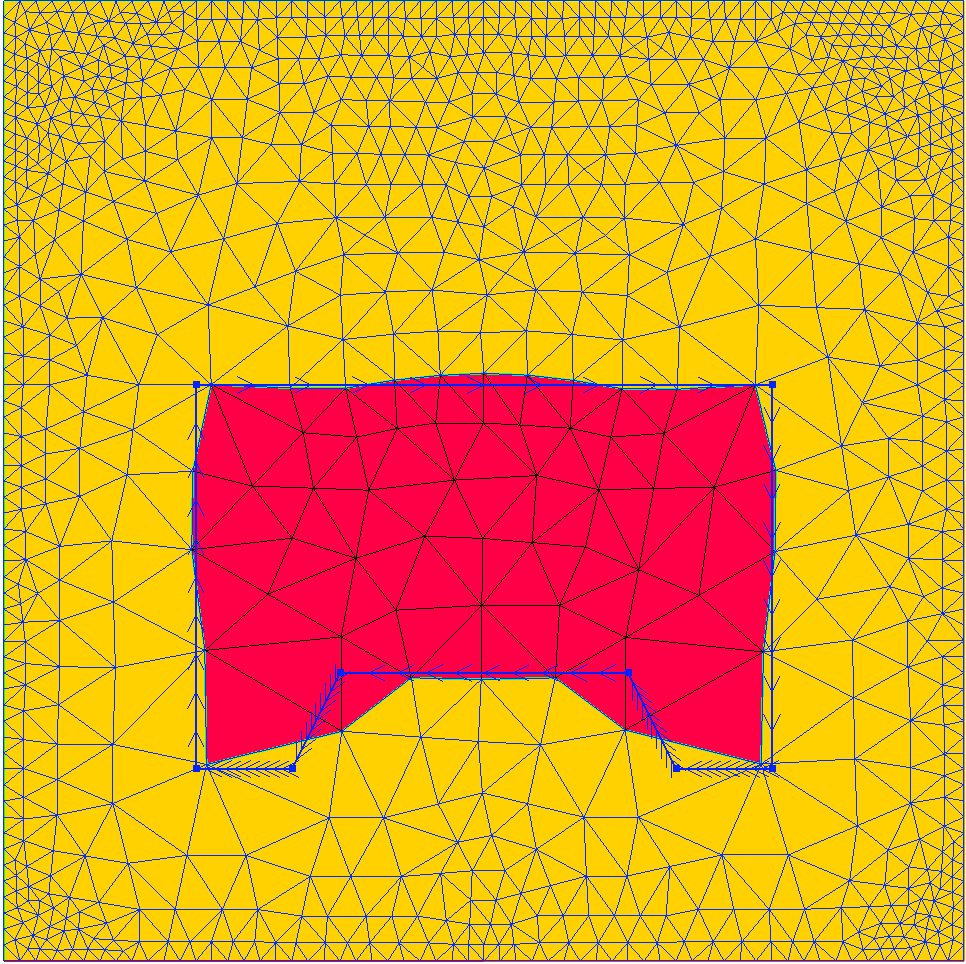}}
\put(170,0){\includegraphics[width=7cm]{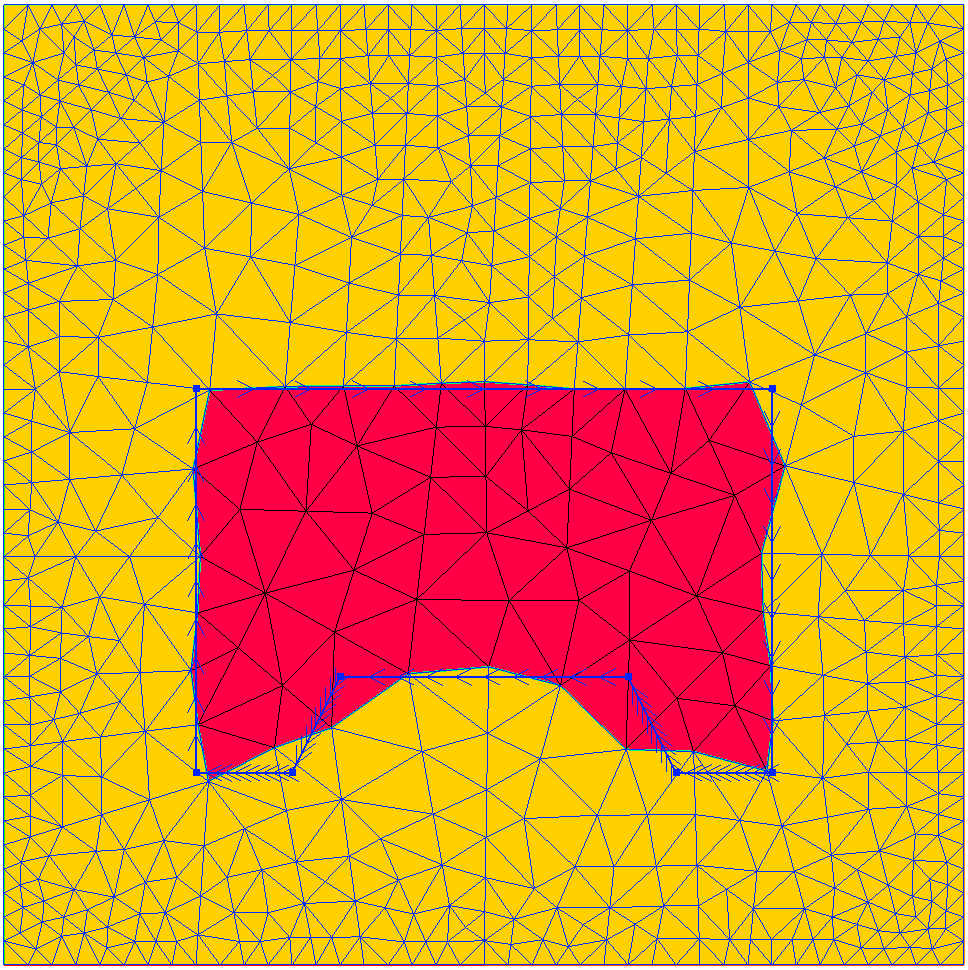}}
\end{picture}
\caption{\label{fig:cav_28}Reconstruction of a piecewise constant conductivity on a non-convex polygon starting from 28 measurements. The blue line represents the target shape. The values of the conductivity are known.  Initial guess (top-left). Reconstruction from noiseless data with (top-right) and without (bottom-left) the regularization step. Bottom Right: reconstruction with regularization for 3\% noisy data.}
\end{figure}
\begin{figure}
\begin{picture}(300,420)
\put(-60,220){\includegraphics[width=7cm]{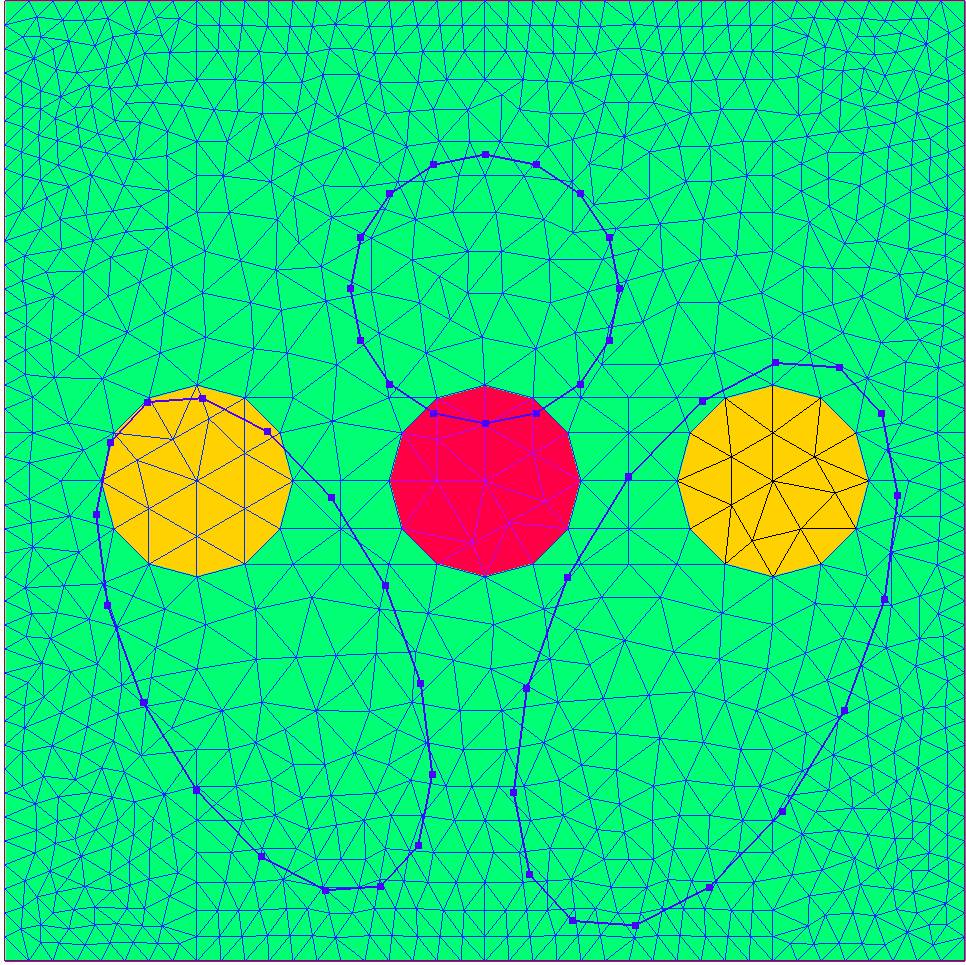}}
\put(170,220){\includegraphics[width=7cm]{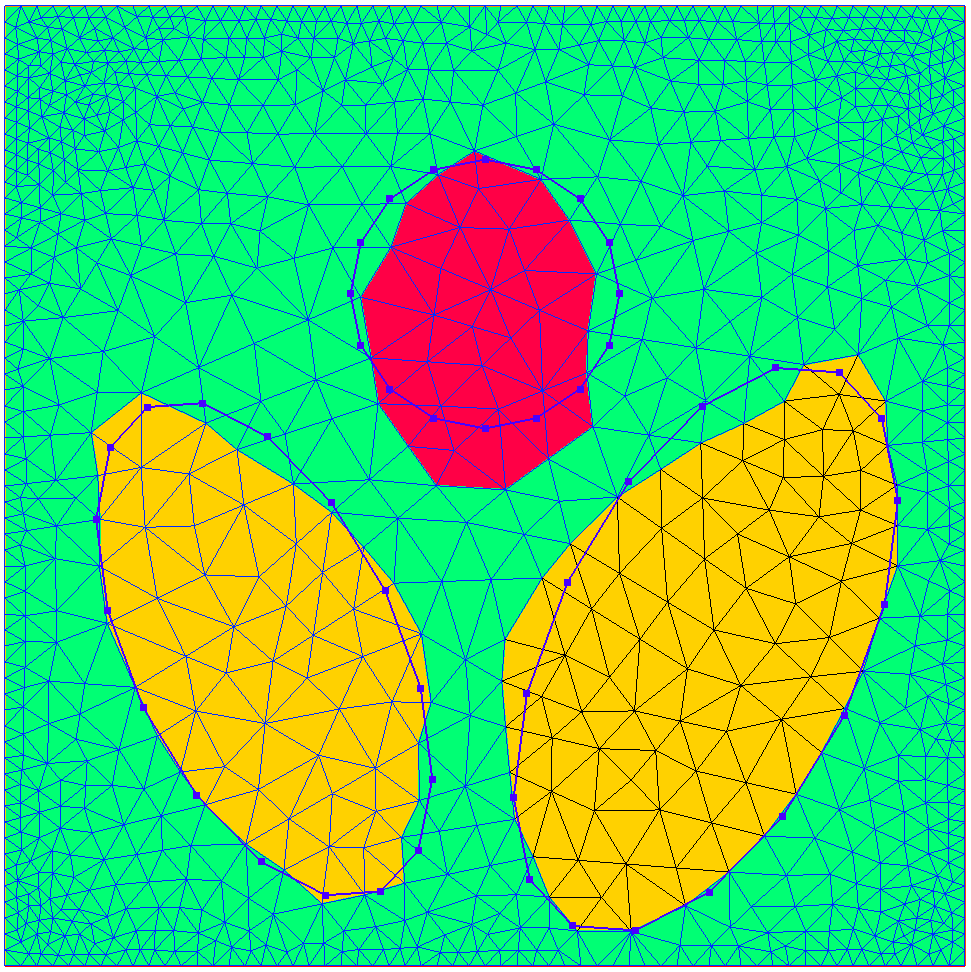}}
\put(-60,0){\includegraphics[width=7cm]{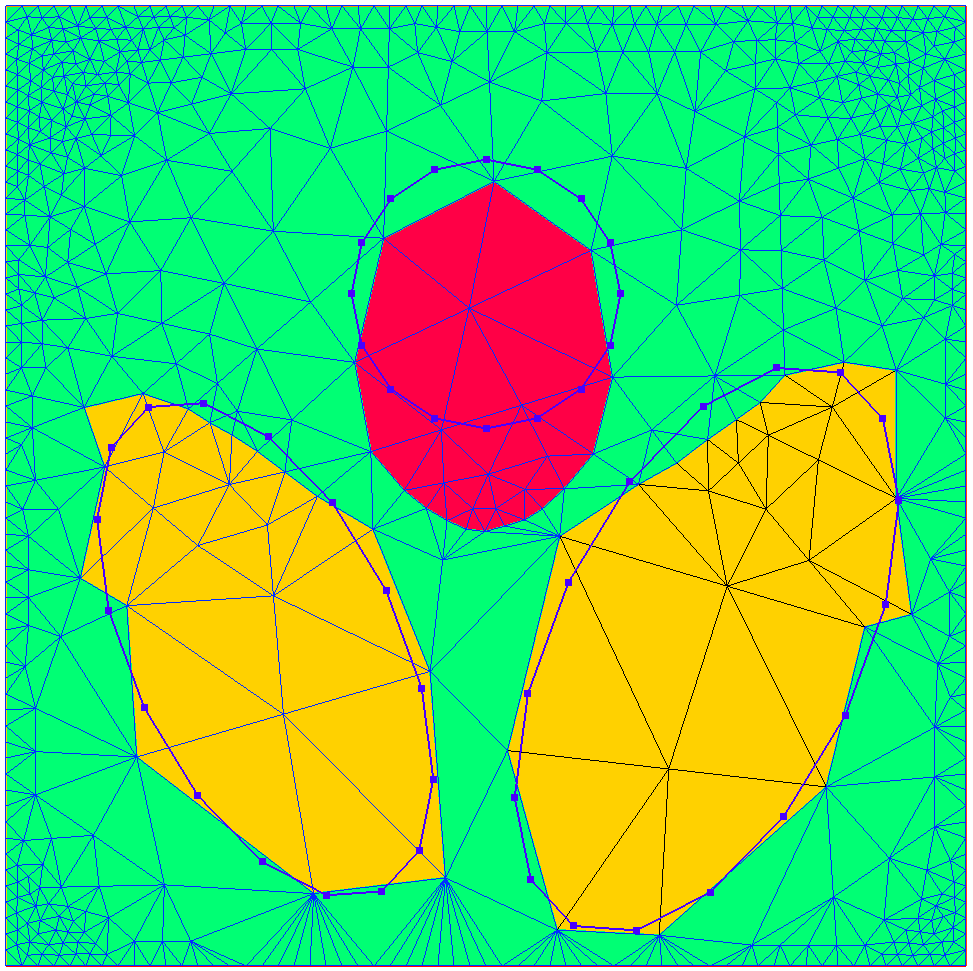}}
\put(170,0){\includegraphics[width=7cm]{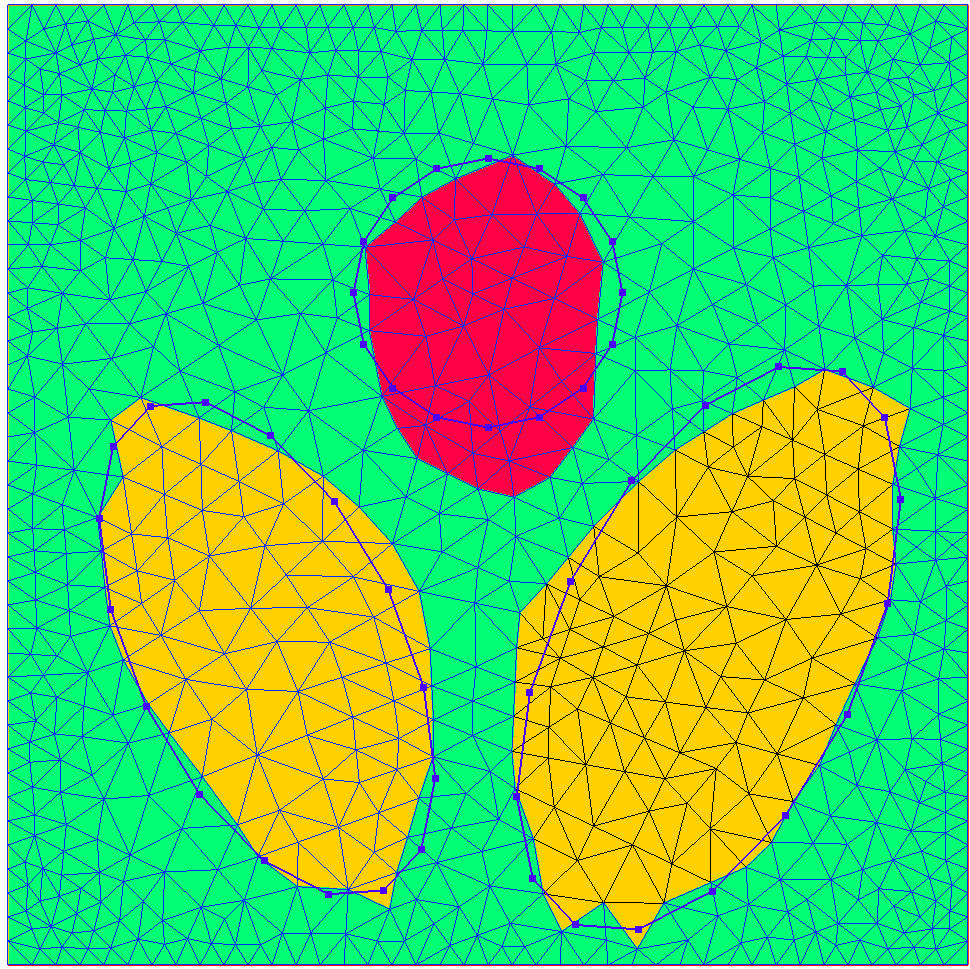}}
\end{picture}
\caption{\label{fig:hnl_6}Reconstruction of the heart and lung phantom starting from 6 measurements. The blue lines highlight the target shapes. The values of the conductivity are known. Initial guess (top-left). Reconstruction from noiseless data with (top-right) and without (bottom-left) the regularization step. Bottom Right: reconstruction with regularization for 3\% noisy data.}
\end{figure}

In this first set of examples, we assume to know the values of the conductivity, and we only reconstruct the shape of the partition. 

In Figure \ref{fig:poly_6}, we consider a piecewise conductivity which is equal to 10 on a convex pentagon and to 1 in the background. The reconstruction is carried out using 6 boundary measurements. The initial guess is a regular polygon of 14 sides (top-left). We present a noiseless reconstruction, with (top-right) and skipping (bottom-left) the regularization step, and a reconstruction with regularization when 3\% of noise is added to the data (bottom-right). The regularization parameters are set to $\delta_1 = 0.7\, \delta$ and $\delta_2 = 1.8\, \delta$, where $\delta$ is the length of the side of the initial guess. Figure \ref{fig:poly_6} shows that the three reconstructions well identify the shape with a comparable precision.

A non-convex polygon is considered in Figure \ref{fig:cav_28}, where the target conductivity is 10 inside and 1 in the background. Here, we employ 28 boundary measurements. The initial guess is a regular polygon of 24 sides (top-left). For this phantom, we present a noiseless reconstruction with (top-right) and without (bottom-left) the regularization step, and a reconstruction post regularization in the presence of a 3\% of noise added to the data. The regularization parameters are chosen as $\delta_1 = 0.85\, \delta$ and $\delta_2 = 1.8\, \delta$, where $\delta$ is the length of the side of the initial guess. We notice here that the regularization step helps to reconstruct more precisely the non-convex part of the unknown. It is also interesting to observe that the shape is well identified also in the noisy case.

In Figure \ref{fig:hnl_6}, we consider the so-called heart and lung phantom \cite{Mueller2002}, with background conductivity 1, two ellipses with conductivity 0.5 and a disk with conductivity 2 (both ellipses and the disk are approximated with 16-sided polygons). The initial guess coincides with three identical regular polygons of 16 sides each (top-left). The regularization parameters are $\delta_1 = 0.9\, \delta$ and $\delta_2 = 1.8\, \delta$, where $\delta$ is the length of the side of one of the initial guess polygons. The reconstruction is done using 6 boundary measurements. Here, the difference between the regularized (top-right) and the non-regularized (bottom-left) reconstruction is significative. Moreover, the reconstruction is very robust to the 3\% noise added to the data (bottom-right).

\begin{rem}
The three phantoms just presented have different contrast. It is therefore natural to study the dependence of the reconstruction on the contrast. We observed that the algorithm converges faster in case of higher contrast, yet the reconstruction quality is the same. For this reason we decided to not include reconstructions of the same phantom with different contrast values.
\end{rem}

\subsection{Reconstruction from a misplaced initial guess}
\begin{figure}
\begin{picture}(300,420)
\put(-60,220){\includegraphics[width=7cm]{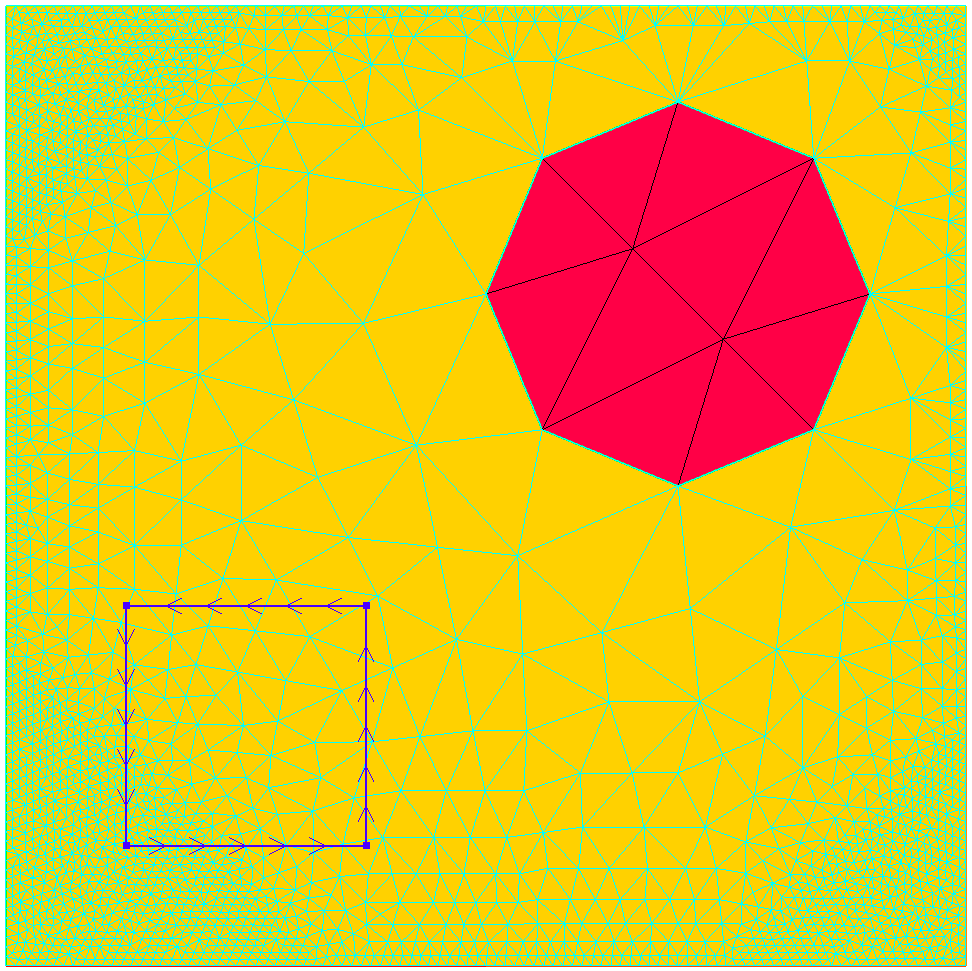}}
\put(170,220){\includegraphics[width=7cm]{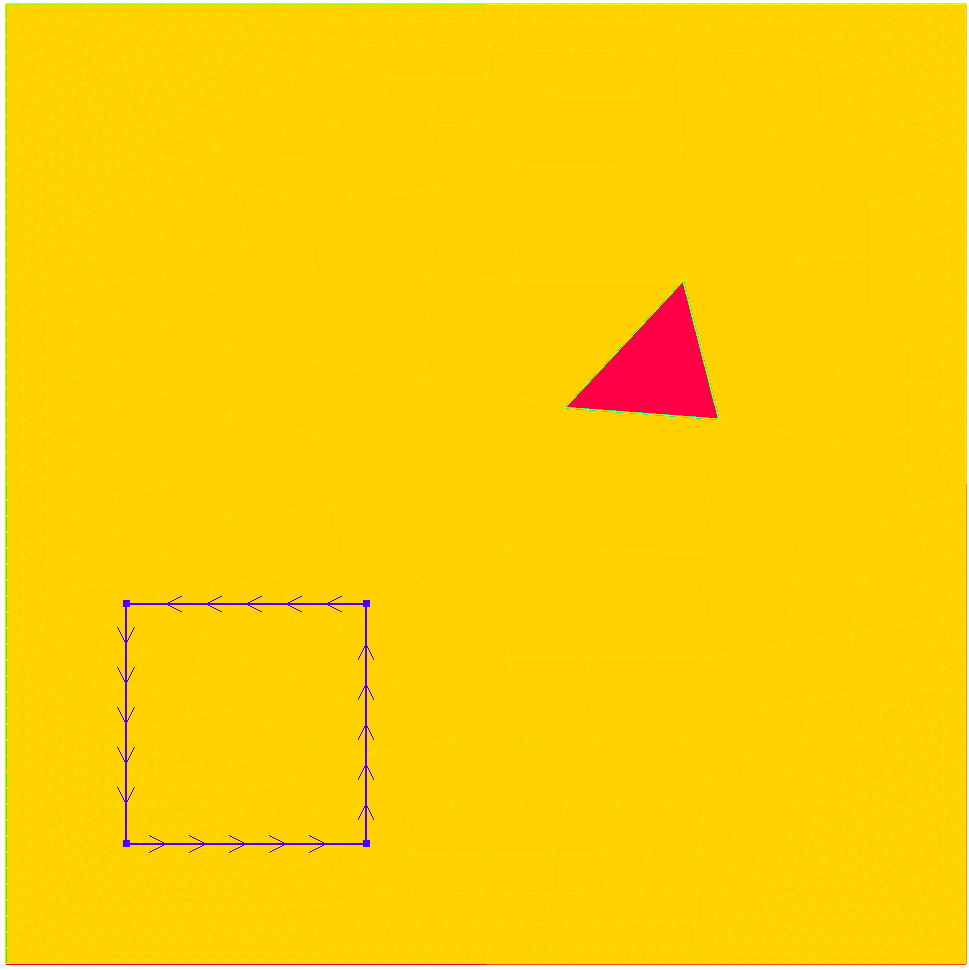}}
\put(-60,0){\includegraphics[width=7cm]{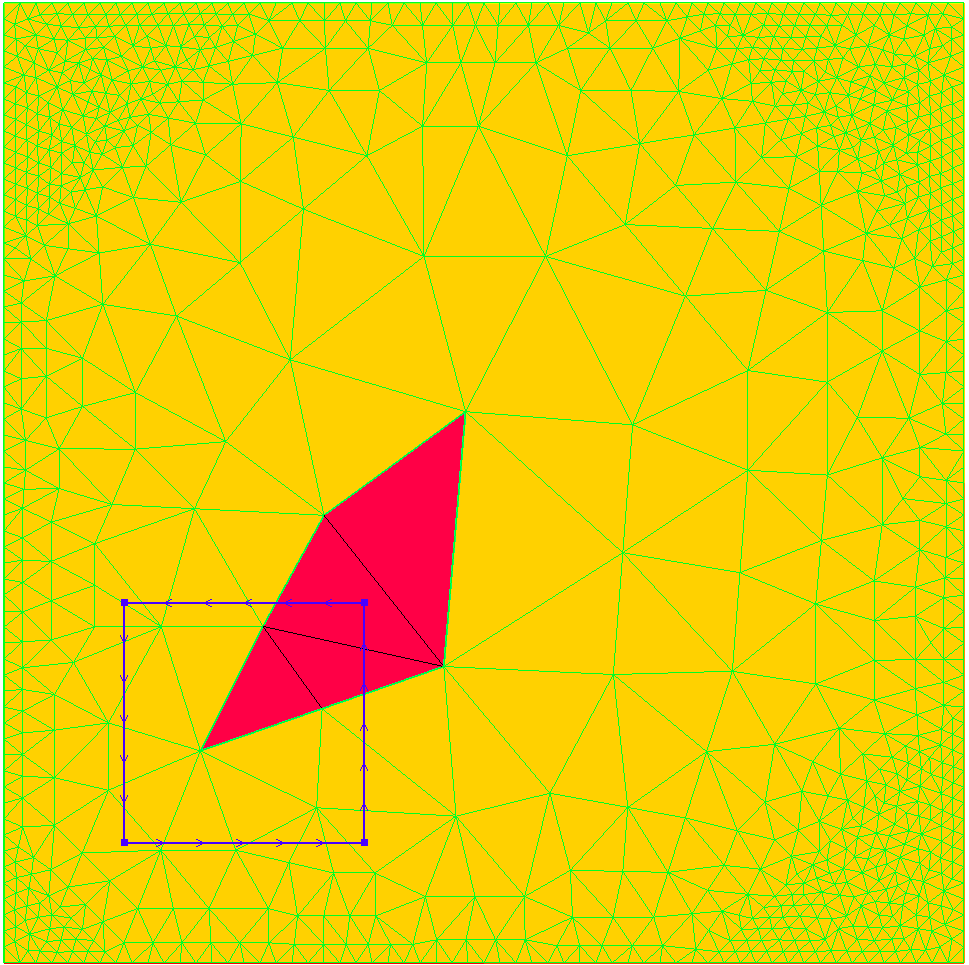}}
\put(170,0){\includegraphics[width=7cm]{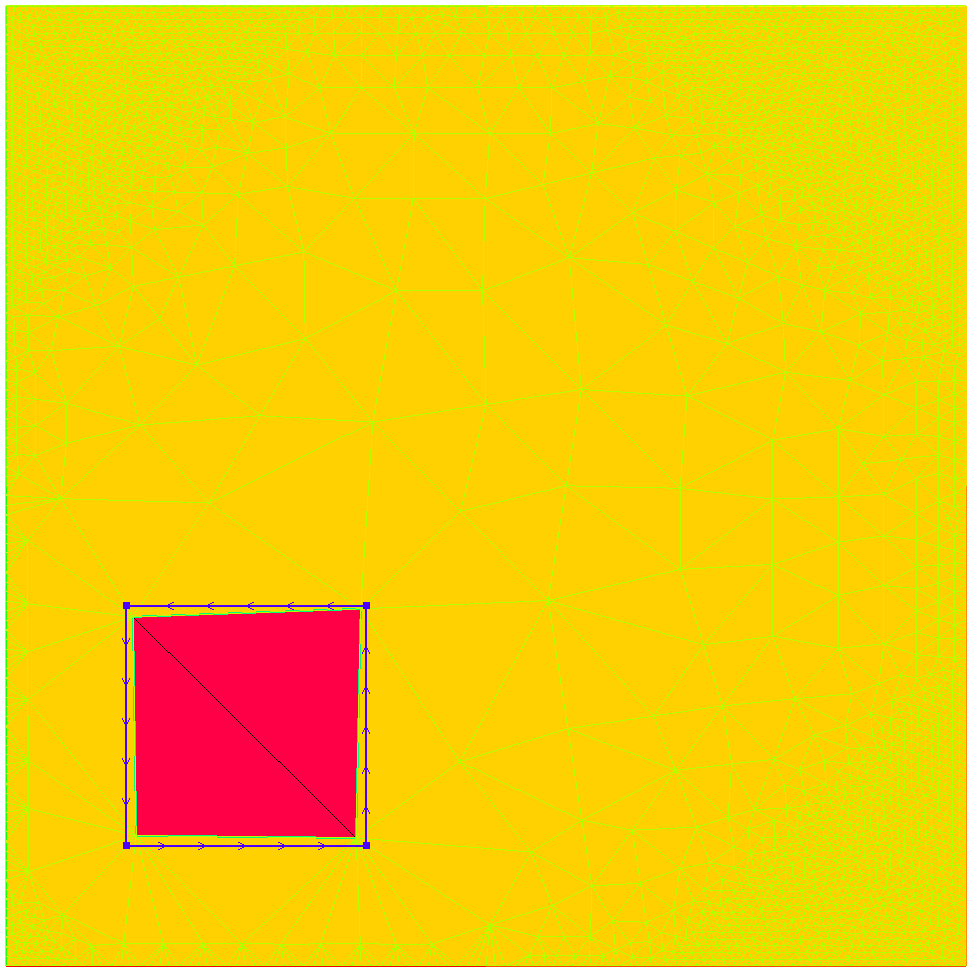}}
\end{picture}
\caption{\label{fig:hnl_far} Reconstruction of a piecewise conductivity on a square from 28 measurements. The blue line represents the target shape. The values of the conductivity are known. Initial guess (top-left). Intermediate configurations (top-right and bottom-left). Final reconstructed conductivity (bottom-right).}
\end{figure}
With the example in Figure \ref{fig:hnl_far}, we show the robustness of the algorithm in recovering an unknown shape from a misplaced initial guess. The target is a piecewise conductivity with value 10 inside a square and 1 in the background. The initial guess is an octagon, far from the exact square
(top-left). The regularization parameters are set to $\delta_1 = 0.8\, \delta$ and $\delta_2 = 1.7\, \delta$, being $\delta$ the length of the side of the octagon. The algorithm is able to recover the square (bottom-right). The striking result strongly depends on the choice of the regularization parameters. Indeed, after the first few iterations, the octagon becomes a triangle (top-right). This reduction in the degrees of freedom avoids potential degeneracy of the shape. The triangle eventually changes and becomes a square when approaching the target (bottom-left).

\subsection{Simultaneous reconstruction}
\begin{figure}
\begin{picture}(300,420)
\put(-60,220){\includegraphics[width=7cm]{images/hnl_0.png}}
\put(170,220){\includegraphics[width=7cm]{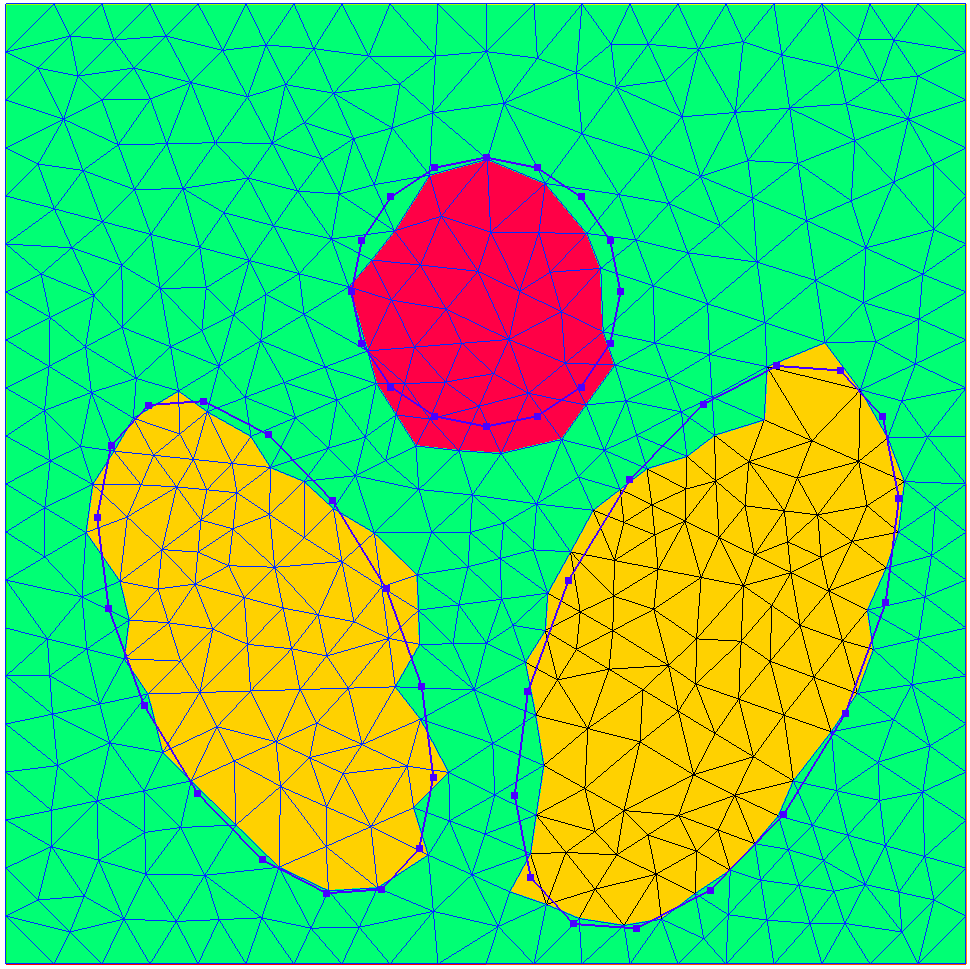}}
\put(-60,0){\includegraphics[width=7cm]{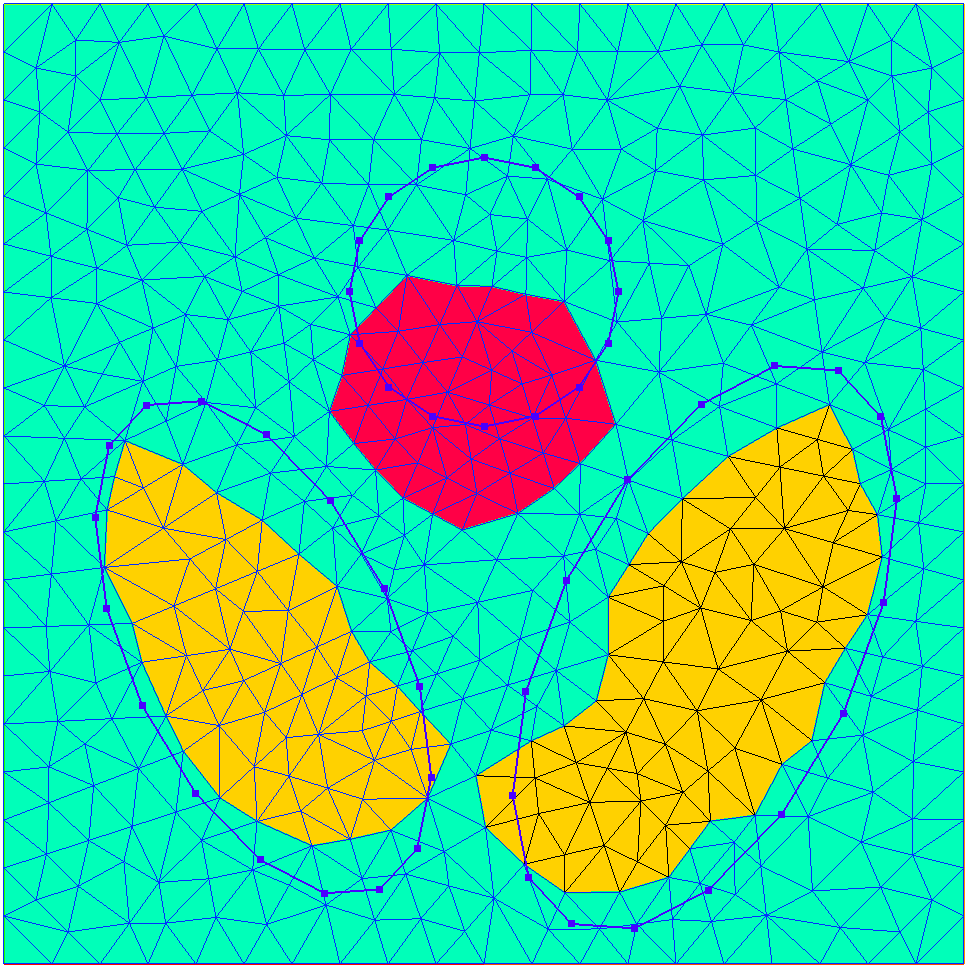}}
\put(170,0){\includegraphics[width=7cm]{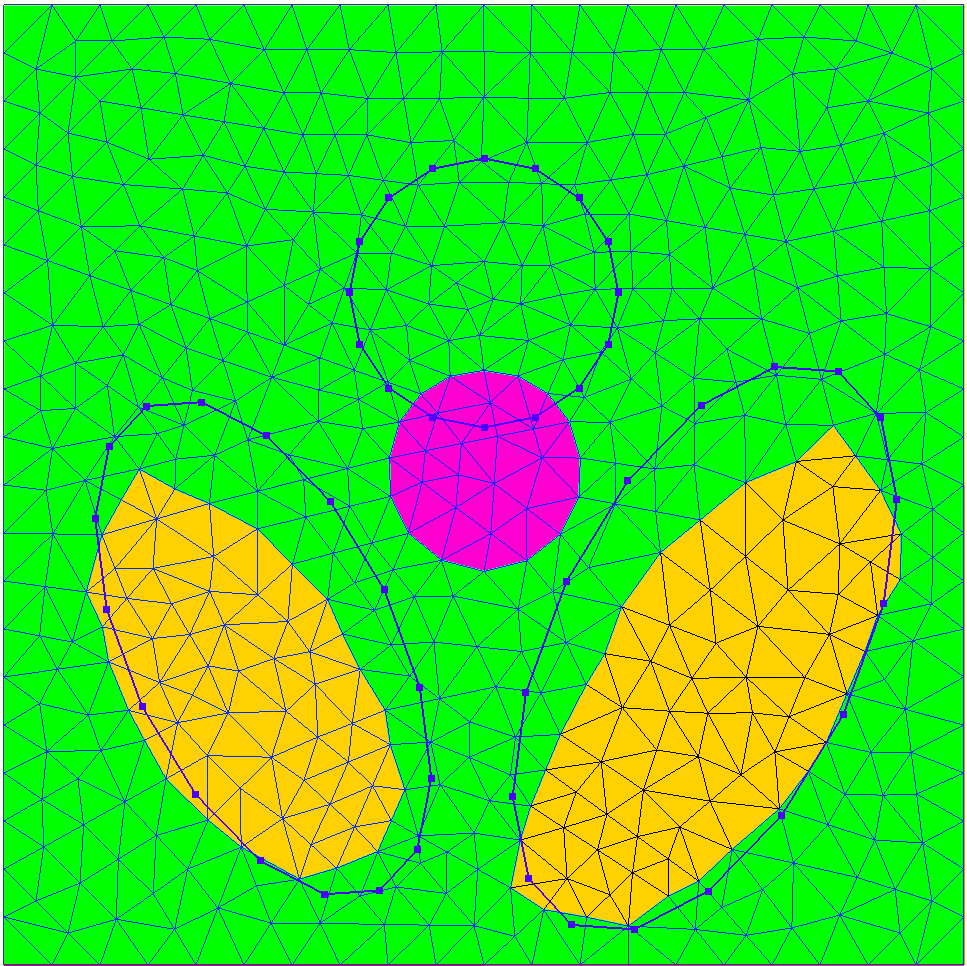}}
\end{picture}
\caption{\label{fig:hnl_28_full}
Reconstruction of the heart and lung phantom starting from 28 measurements. The blue lines highlight the target shapes. The values of the conductivity are unknown. Shape of the initial guess (top-left). Reconstruction from noiseless (top-right) and 5\% noisy (bottom-left) data. Blind reconstruction from 1\% noisy data (bottom-right).
}
\end{figure}
In this section, we present reconstructions obtained with the full algorithm, i.e., the conductivity values are also unknown and updated at each iteration.

We focus on the heart and lung phantom, this configuration being characterized by more interesting features compared with the other ones. The background is assumed to be known and is equal to 1, while the coefficients to be recovered are the lungs, with value 0.5, and the heart, with value 2. The reconstructions presented in Figure \ref{fig:hnl_28_full} are obtained from 28 boundary data. The initial guess (top-left) identifies the shapes the algorithm acts on, while three different values are adopted for the initial conductivity during the reconstruction procedure. The regularization parameters are set to $\delta_1 = 0.9\, \delta$ and $\delta_2 = 1.8\, \delta$ independently of the run, with $\delta$ the length of the side of one of the initial guess polygons.

The first reconstruction deals with noiseless data. The values of the initial guess are 0.55 in the lungs and 2.05 in the heart. The reconstructed values are 0.49 and 2.05, respectively, and the shape is well reconstructed for the three inclusions (top-right). The second reconstruction starts from the same initial guess, by adding a 5\% noise to the data. The reconstructed conductivity values are 0.37 in the lungs and 2.06 in the heart. The shape of the lungs is better recovered than the heart (bottom-left).

The last reconstruction is essentially \textit{blind}, where we assume that the initial guess is provided by a constant conductivity equal to 1 (the background). Moreover, data contains 1\% of additive noise. Despite the challenging setting, some features are correctly recovered. The reconstructed values are 0.44 in the lungs and 0.94 in the heart. The shape and the values of the lungs are well reconstructed, whereas this is not the case for the heart  (bottom-right), due to the lack of a priori information and the central position of the middle polygon in the initial guess.

\subsection{Sensitivity to the number of measurements}
\begin{figure}
\begin{picture}(300,420)
\put(-60,220){\includegraphics[width=7cm]{images/hnl_0.png}}
\put(170,220){\includegraphics[width=7cm]{images/hnl_6.png}}
\put(-60,0){\includegraphics[width=7cm]{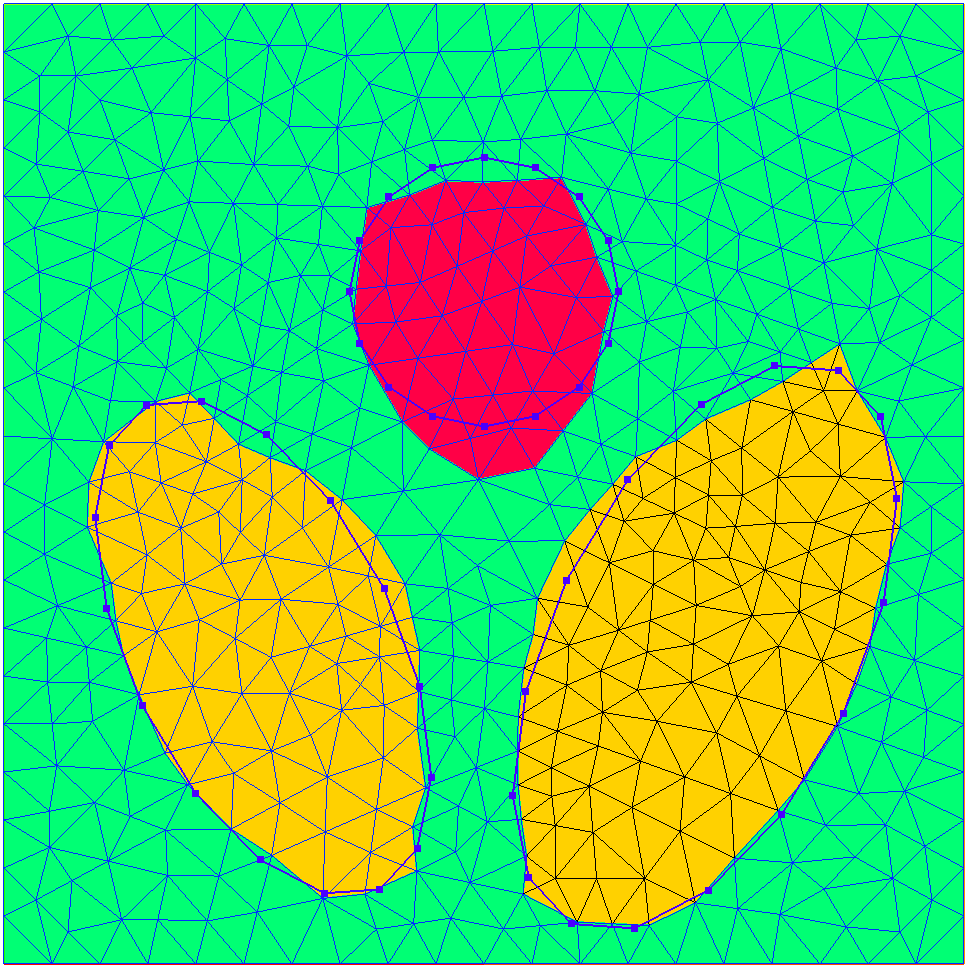}}
\put(170,0){\includegraphics[width=7cm]{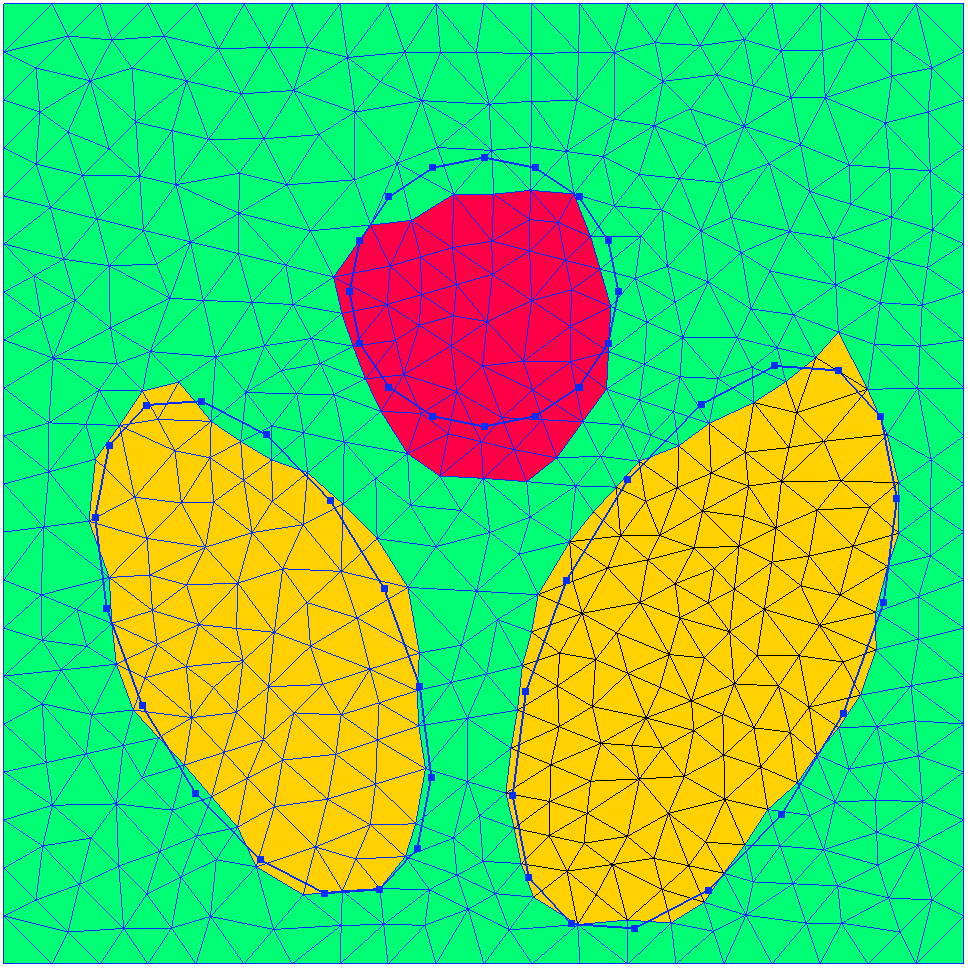}}
\end{picture}
\caption{\label{fig:hnl_all}Reconstruction of the heart and lung phantom starting from 6 (top-right), 28 (bottom-left) and 120 (bottom-right) measurements. The blue lines highlight the target shapes. The values of the conductivity are known. Shape of the initial guess (top-left).}
\end{figure}

Using the heart and lung phantom (always with value 0.5 in the lungs, 2 in the heart, and 1 in the background), we check now how the reconstructions change using a different set of measurements. The regularization parameters are always chosen as $\delta_1 = 0.9\, \delta$ and $\delta_2 = 1.8\, \delta$, with $\delta$ the length of the side of one of the initial guess polygons.

In Figure \ref{fig:hnl_all}, we present shape reconstructions (conductivity values are known) starting from noiseless data sets of 6, 28 and 120 boundary measurements. While there is an evident improvement passing from 6 to 28 data, the reconstruction quality obtained from 28 and 120 measurements looks very similar in this example. This is due to the ill-posedness of the problem that limits the resolution.

\subsection{Sensitivity to the noise level}
\begin{figure}
\begin{picture}(300,420)
\put(-60,220){\includegraphics[width=7cm]{images/hnl_0.png}}
\put(170,220){\includegraphics[width=7cm]{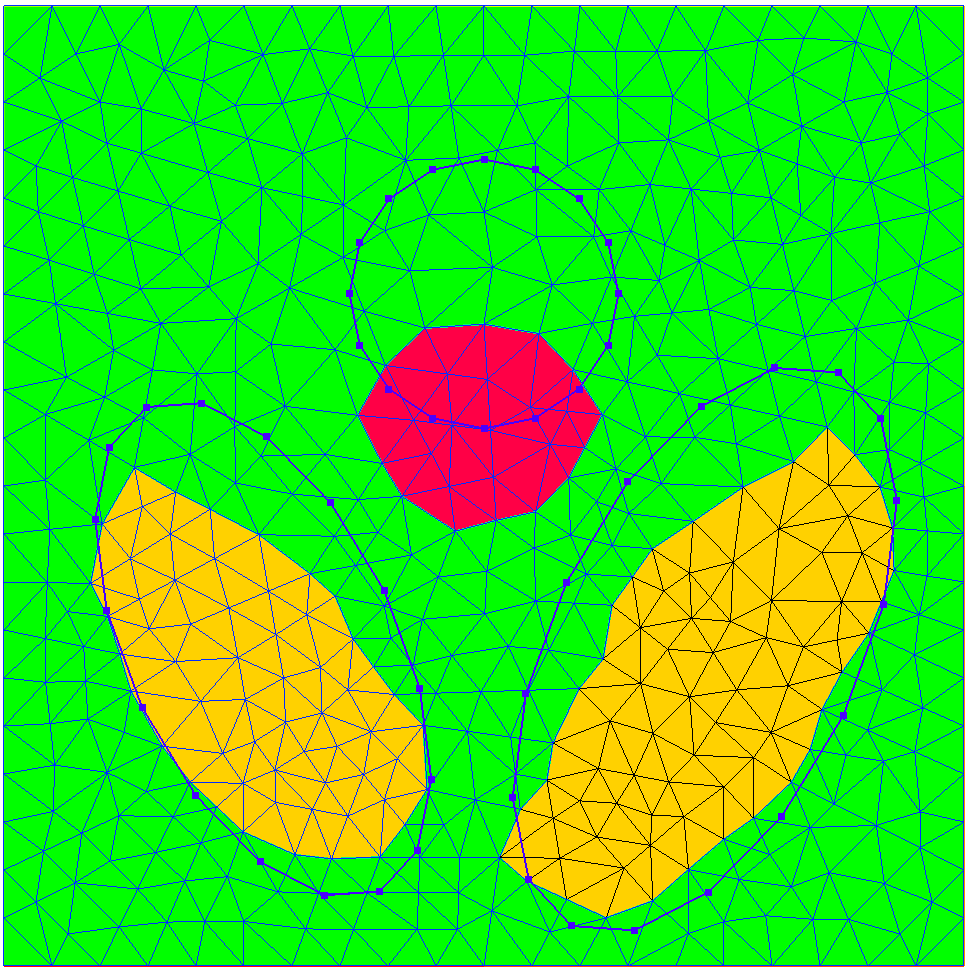}}
\put(-60,0){\includegraphics[width=7cm]{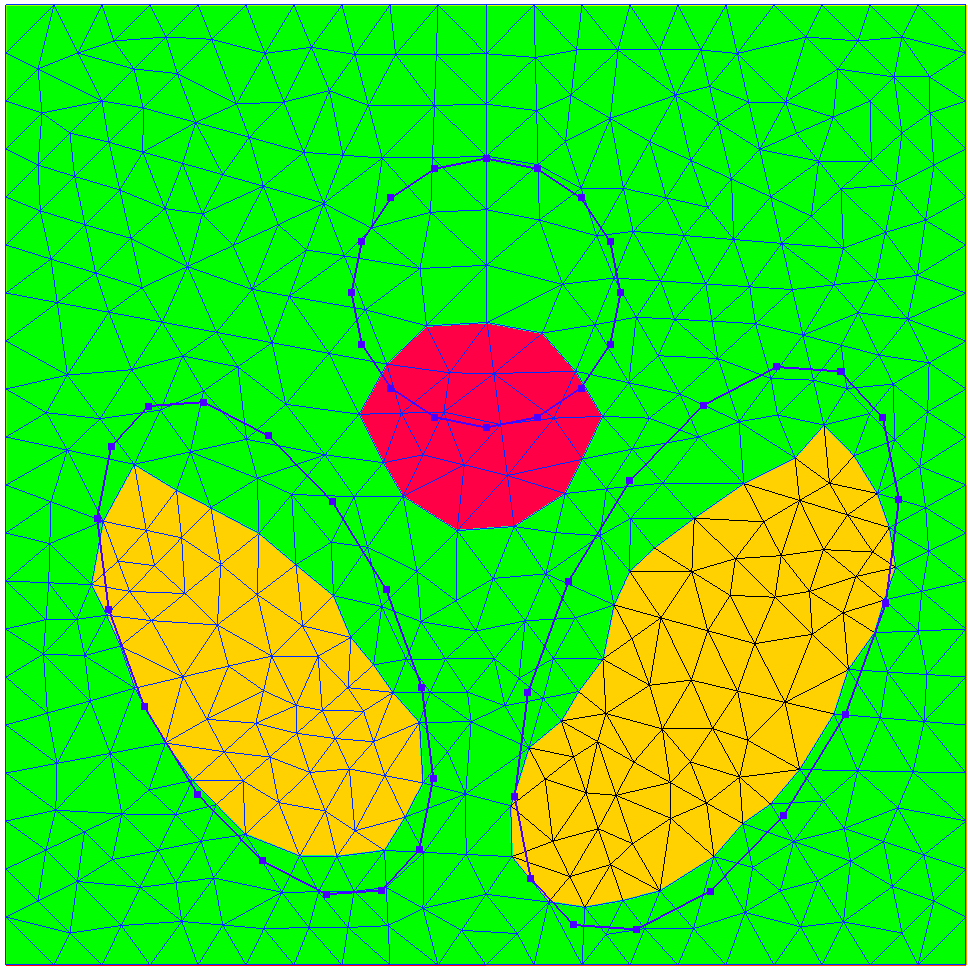}}
\put(170,0){\includegraphics[width=7cm]{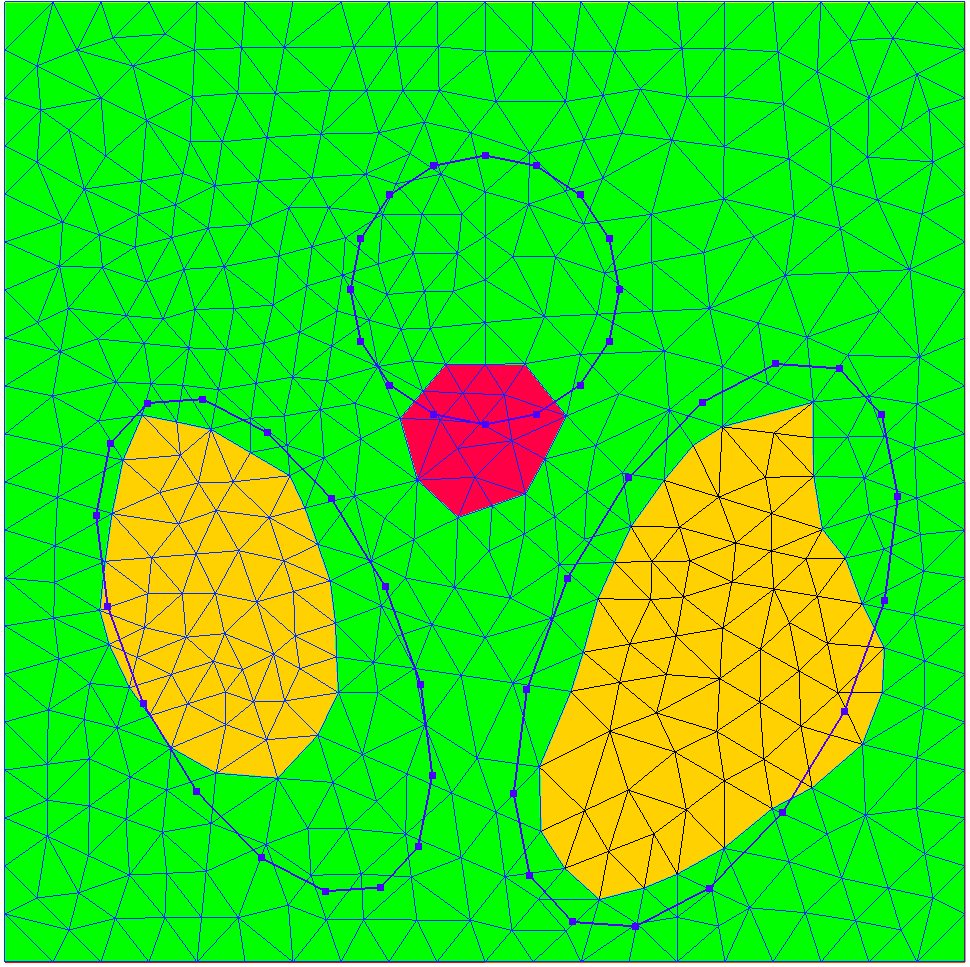}}
\end{picture}
\caption{\label{fig:hnl_noise}Reconstruction of the heart and lung phantom starting from noisy measurements. The blue lines highlight the target shapes. The values of the conductivity are unknown. Shape of the initial guess (top-left). Reconstruction starting from 0.5\% (top-right), 5\% (bottom-left) and 20\% (bottom-right) noisy data.}
\end{figure}

In this section, we show how the algorithm is stable to noise in the measurements. In Figure \ref{fig:hnl_noise}, we present reconstructions using the full algorithm (unknown shape and values) for the heart and lung phantom. The regularization parameters are always chosen as $\delta_1 = 0.9\, \delta$ and $\delta_2 = 1.8\, \delta$, being $\delta$ the length of the side of one of the initial guess polygons.

The initial guess has values 0.7 for the lungs and 1.5 for the heart, far from the exact values 0.5 and 2, respectively. Data contain 0.5\% (top-right), 5\% (bottom-left) and 20\% (bottom-right) of additive noise, respectively. The tolerance in the stopping criterion is chosen experimentally, according to the noise level: $0.004$ for both 0.5\% and 5\% of noise, $0.02$ for 20\% of noise. 
We observe that the shape of the lungs is well recovered even with very noisy data, whereas the heart is poorly recovered, due to the values of the initial guess and the central position of the middle disk. The reconstructed values exhibit a low sensitivity to the noise level. They are 0.4 in the lungs and 1.51 in the heart for both 0.5\% and 5\% of noise, while we obtain 0.37 in the lungs and 1.49 in the heart when considering 20\% of noise.

%

\section{Conclusions}

We have presented a new shape optimization approach that is able to well identify a piecewise constant conductivity on a polytopal partition in electrical impedance tomography. Despite the ill-posedness of EIT, the a-priori assumption on the conductivity to be piecewise constant on a polygonal partition regularizes the problem. This fact, coupled with the effectiveness of the algorithm, allows us to recover an unknown partition (or at least a subset of it) even in the case of noisy data and a wrong initial guess. This is possible due to the use of a distributed shape derivative, to a new regularization step and a new computation of the descent direction.  The algorithm performs better when a good approximation of the values of the conductivity is available. When the coefficients are not known, we noticed that it takes more iterations to approximate their values than to identify the shapes. These values could be obtained, for instance, from a one-step reconstruction \cite{Siltanen2000} and used as a first guess to recover the interfaces more accurately (see also \cite{Hamilton2016} for a hybrid one-step method). The algorithm can be accelerated using more advanced minimization methods and more efficient forward solvers. Possible approaches to accelerate the minimization include Newton-type methods and also reduced order models to speed up the forward and adjoint problems. This would be crucial with a view to practical applications.

\section*{Acknowledgments}
E. Beretta and M. Santacesaria thank the New York University Abu Dhabi for its kind hospitality that permitted a further development of the present research. The work of E. Beretta and M. Santacesaria was partially supported by GNAMPA (Gruppo Nazionale per l'Analisi Matematica, la Probabilità e le loro Applicazioni).

\bibliographystyle{elsarticle-num}

\end{document}